\newtheorem{thm}{Theorem}[section]
\newtheorem{lem}[thm]{Lemma}
\newtheorem{rem}{Remark}[section]
\newtheorem{pro}{Proposition}[section]
\theoremstyle{definition}
\newtheorem{definition}{Definition}[section]
\makeatletter\@addtoreset{equation}{section}
\renewcommand\theequation
\begin{document}

\title{An affine Orlicz P\'olya-Szeg\" o principle }
\author{Youjiang Lin
\footnote{Research of the author is supported by the funds of the Basic and Advanced Research Project of CQ CSTC cstc2015jcyjA00009 and Scientific and Technological Research Program of Chongqing Municipal Education Commission (KJ1500628),  Scientific research funds of Chongqing Technology and Business University 2015-56-02.}
}
\date{}
 \maketitle
{\bf Abstract}. The author \cite{LYJ17} established the affine Orlicz P\'olya-Szeg\" o principle for log-concave functions and conjectured that the principle can be extended to the general Orlicz Sobolev functions. In this paper, we confirm this conjecture completely.  An affine Orlicz P\'olya-Szeg\" o principle, which includes all the previous affine P\'olya-Szeg\" o principles as special cases, is formulated and proved.  As a consequence, an Orlicz-Petty projection inequality for star bodies is established.

 {\bf AMS Subject Classification
2010}  46E35, 46E30, 52A40

{\bf Keywords and Phrases.}  P\'olya-Szeg\" o principle; Orlicz-Sobolev space; Steiner symmetrization; Orlicz projection bodies

\section{Introduction}

The classical P\'olya-Szeg\" o principle \cite{PS51} states that, given a non-negative function $f:\mathbb{R}^n\rightarrow \mathbb{R}$, the Dirichlet integral
$\int_{\mathbb{R}^n}|\nabla f|^p$ decreases under suitable rearrangements, the two most common of which are the symmetric decreasing rearrangement about a point and Steiner symmetrization about a hyperplane. Their corresponding P\'olya-Szeg\"o inequalities are a powerful tool to approach a wide number of variational problems of geometric and functional nature (see, e.g.,\cite{BCFP,Bu97,BF15,CPS15,CEFT08,CF06,ET04,Wang12,Wang13}).

Zhang \cite{Zhang99} and Lutwak et al. \cite{LYZ02} formulated
and proved a remarkable affine  $L_p$ P\'olya-Szeg\" o principle for $1\leq p<n$, which significantly strengthens the classical P\'olya-Szeg\" o principle. Later,  Cianchi et al. \cite{CLYZ09} perfectly completed  the picture of affine P\'olya-Szeg\" o principle (for general $p\geq 1$). In \cite{HSX12}, Haberl, Schuster and Xiao  obtained a beautiful asymmetric version of the affine P\'olya-Szeg\"o principle and proved that it is stronger and directly implies the symmetric form of Cianchi et al.

 The affine $L_p$ P\'olya-Szeg\" o-type principle is closely related to the $L_p$ Brunn-Minkowski theory of convex bodies (see e.g.,
 \cite{BLYZ12,Gardner02,HS09,HS0902,HSX12,LL11,Ludwig05,Ludwig14,LR10,LXZ11,Lu93,Lu96,LYZ00,LYZ06,Werner08} for additional references). The fact that $L_p$ spaces have a  natural generalization, known as Orlicz spaces, motivated Lutwak, Yang and Zhang \cite{LYZ10,LYZ1002} to initiate an extension of the $L_p$ Brunn-Minkowski theory to an Orlicz-Brunn-Minkowski theory. The definition of a corresponding addition came later, in the work of Gardner, Hug and Weil \cite{GHW14}. They developed a very general and comprehensive framework for Orlicz-Brunn-Minkowski theory. This theory has expanded rapidly (see e.g., \cite{Boroczky13,GHW15,GZ98,HP14,Lijin16,XJL14,Ye16,ZXZ14,Zhu12,ZX14}).

Let $\mathcal{N}$ be the class of convex functions $\phi:\mathbb{R}\rightarrow [0,\infty)$ such that $\phi(0)=0$ and such that $\phi$ is either strictly decreasing on $(-\infty,0]$ or $\phi$ is strictly increasing on $[0,\infty)$.
 Moreover, we will assume throughout that $\Phi(t)=\max\{\phi(t),\phi(-t)\}$, $t\in[0,\infty)$, and $\lim_{t\rightarrow+\infty}\Phi(t)/t=+\infty$. It is easily checked that $\Phi(t)$ is a convex function and  strictly increasing on $[0,\infty)$.

 Let $G$ denote a bounded open subset of $\mathbb{R}^n$ and $W^{1,\Phi}(G)$ the first order Orlicz-Sobolev space on $G$ (see Section \ref{s2}). Let $W_0^{1,\Phi}(G)$ denote the subspace of $W^{1,\Phi}(G)$ of those functions whose continuation by $0$ outside $G$ belongs to $W^{1,\Phi}(\mathbb{R}^n)$. For $v\in S^{n-1}$ and $f\in  W_0^{1,\Phi}(G)$,  we define
\begin{eqnarray}\label{52}
\|v\|_{f,\phi}=\|\nabla_vf\|_{\phi}=\inf\left\{\lambda>0:\;\frac{1}{|G|}\int_{G}\phi\left(\frac{\nabla_vf}{\lambda}\right)dx\leq 1\right\},
\end{eqnarray}
where $\nabla_vf$ is the directional derivative of $f$ in the direction $v$. The definition immediately provides the extension of $\|\cdot\|_{f,\phi}$ from $S^{n-1}$ to $\mathbb{R}^n$. Now $(\mathbb{R}^n,\|\cdot\|_{f,\phi})$ is the $n$-dimensional Banach space that we shall associate with $f$.
And its unit ball $B_{\phi}(f)=\{x\in\mathbb{R}^n:\|x\|_{f,\phi}\leq 1\}$ is a convex body in $\mathbb{R}^n$. An important fact is that its volume $|B_{\phi}(f)|$ is invariant under affine transformations of the form $x\mapsto Ax+x_0$, with $x_0\in\mathbb{R}^n$ and $A\in SL(n)$. We call the unit ball $B_{\phi}(f)$ the {\it Orlicz-Sobolev affine ball of $f$}. We call
\begin{eqnarray}\label{1a}
\mathcal {E}_{\phi}(f):=|B_{\phi}(f)|^{-\frac{1}{n}}=\left(\frac{1}{n}\int_{S^{n-1}}\|\nabla_vf\|_{\phi}^{-n}dv\right)^{-\frac{1}{n}}
\end{eqnarray}
the {\it Orlicz-Sobolev affine energy} of $f$.

Talenti \cite{Ta94} (also see \cite{BZ88}) proved a Euclidean Orlicz P\'olya-Szeg\" o principle,
\begin{eqnarray}\label{1d}
\|\nabla f^{\star}\|_{\phi}\leq \|\nabla f\|_{\phi},
\end{eqnarray}
 which extended the classical P\'olya-Szeg\" o principle to Orlicz-Sobolev spaces.

 In \cite{LYJ17}, the author established an affine Orlicz P\'olya-Szeg\"o principle for log-concave function: If $f\in W_0^{1,\Phi}(G)$ is a log-concave functions and $f^{\star}$ is its symmetric decreasing rearrangement, then
 \begin{eqnarray}\label{a2}
 \mathcal{E}_{\phi}(f^{\star})\leq \mathcal{E}_{\phi}(f),
 \end{eqnarray}
which includes all the affine $L_p$ P\'olya-Szeg\"o principles as special cases (when restricted to log-concave functions). The author \cite{LYJ17} conjectured that the  principle can be extended to the general Orlicz Sobolev functions. In this paper, we confirm this conjecture completely.
\begin{thm}\label{1}
 If $f\in W_0^{1,\Phi}(G)$  and $f^{\star}$ is its symmetric decreasing rearrangement, then
\begin{eqnarray}\label{a1}
\mathcal {E}_{\phi}(f^{\star})\leq \mathcal {E}_{\phi}(f).
\end{eqnarray}
\end{thm}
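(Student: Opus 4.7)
The plan is to adapt the Cianchi--Lutwak--Yang--Zhang approach to the affine $L_p$ P\'olya--Szeg\"o principle \cite{CLYZ09} to the Orlicz setting, replacing power-law estimates by modular ones based on the convexity of $\phi$. The overall structure is: approximate, iterated Steiner symmetrization, single-symmetrization monotonicity, pass to the limit.

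\textbf{Step 1 (Reduction to a smooth class).} First I would reduce to $f\in C_c^{\infty}(G)$, $f\ge 0$, with $f$ Morse-regular so that Sard's theorem makes almost every level set $\{f=t\}$ a smooth hypersurface. Density of $C_c^{\infty}(G)$ in $W_0^{1,\Phi}(G)$ (guaranteed by $\lim_{t\to\infty}\Phi(t)/t=\infty$, which also prevents Luxemburg-norm degeneracy), together with the continuity of the operations $f\mapsto f^{\star}$ and $f\mapsto\mathcal{E}_\phi(f)$ under this approximation, will transfer the conclusion back to a general $f\in W_0^{1,\Phi}(G)$.

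\textbf{Step 2 (Iterated Steiner symmetrization).} Choose a countable dense sequence $(w_k)\subset S^{n-1}$. An Orlicz version of the standard Brothers--Ziemer / van Schaftingen result says that the iterated Steiner symmetrizations $f^{s,w_1,\ldots,w_k}$ converge to $f^{\star}$ in $W_0^{1,\Phi}(G)$; by upper semi-continuity of $\mathcal{E}_\phi$ along this sequence, the theorem reduces to the single-step monotonicity
\begin{equation*}
\mathcal{E}_\phi\bigl(f^{s,w}\bigr)\le\mathcal{E}_\phi(f)\qquad\text{for every } w\in S^{n-1},
\end{equation*}
where $f^{s,w}$ denotes Steiner symmetrization of $f$ with respect to the hyperplane $w^{\perp}$.

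\textbf{Step 3 (Single-symmetrization monotonicity).} By (\ref{1a}) this is equivalent to
\begin{equation*}
\int_{S^{n-1}}\|\nabla_v f^{s,w}\|_\phi^{-n}\,dv\ \ge\ \int_{S^{n-1}}\|\nabla_v f\|_\phi^{-n}\,dv.
\end{equation*}
When $v=w$, the fibre-wise 1D rearrangement together with Talenti's inequality (\ref{1d}) yields $\|\nabla_w f^{s,w}\|_\phi\le\|\nabla_w f\|_\phi$. For general $v$, however, no such direction-by-direction bound is available, and the integral on $S^{n-1}$ must be handled as a whole. The plan is to decompose each $v=v_w w+v^{\perp}$ with $v^{\perp}\in w^{\perp}$, use the co-area formula to express $\|\nabla_v f\|_\phi$ through the level sets $\{f=t\}$, apply an Orlicz projection-body inequality on each (smooth) level set, and recombine via Fubini. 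Because the Luxemburg norm is not homogeneous, one must work directly with the modular functional $\int_G\phi(\nabla_v f/\lambda)\,dx$ and track the optimal $\lambda$ through the rearrangement, using convexity of $\phi$ and the growth $\Phi(t)/t\to\infty$ as the only algebraic ingredients.

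\textbf{Main obstacle.} The delicate point is the level-set step in Step~3: in the $L_p$ setting one invokes the Petty projection inequality on each level set together with the homogeneity of $\|\cdot\|_p$; the Orlicz analog requires a star-body version of the Petty inequality and careful bookkeeping of the Luxemburg normalization across the symmetrization. This is precisely the content of the Orlicz--Petty projection inequality for star bodies announced in the abstract as a consequence of Theorem~\ref{1}: in practice the functional inequality and the star-body inequality are essentially equivalent, and the proof will establish them in tandem, with the convexity of $\phi$ and $\lim_{t\to\infty}\Phi(t)/t=\infty$ as the critical hypotheses making the modular bookkeeping close up.
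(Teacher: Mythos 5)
Your plan has two genuine gaps, one structural and one circular. First, the convergence framework in Steps 1--2 is not available: there is no known Orlicz analogue of the Brothers--Ziemer/Burchard result giving convergence of iterated Steiner symmetrizations to $f^{\star}$ in the strong $W^{1,\Phi}$ topology (the paper points this out explicitly in Remark \ref{19b}), and the map $f\mapsto B_{\phi}(f)$ can be discontinuous for co-area irregular functions, so the "continuity of $\mathcal{E}_\phi$ under approximation" you invoke in Step 1 is also unjustified. What is actually available is only Burchard's weak $W^{1,1}$ convergence (Proposition \ref{19f}), and the passage to the limit then requires a semicontinuity statement: the paper proves (Lemma \ref{72}, via lower semicontinuity of convex gradient integrals under weak $W^{1,1}$ convergence and the Blaschke selection theorem) that a subsequence of $B_{\phi}(f_i)$ converges to some convex body $K_0$ with $K_0\subset B_{\phi}(f^{\star})$; this one-sided inclusion, not continuity of $\mathcal{E}_\phi$, is what closes the limiting argument.

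Second, and more seriously, your Step 3 rests on an "Orlicz projection-body inequality on each level set," i.e.\ essentially the Orlicz--Petty projection inequality for star bodies. But in this paper that inequality is a \emph{consequence} of Theorem \ref{1} (Section \ref{s6}), not an ingredient; your remark that the functional and star-body inequalities "will be established in tandem" is circular as stated. Moreover, the $L_p$ template you are following (Lutwak--Yang--Zhang, Haberl--Schuster--Xiao, Nguyen) also needs the solution of the normalized $L_p$ Minkowski problem and the homogeneity of $\|\cdot\|_p$, neither of which has an Orlicz counterpart --- precisely the obstruction the paper identifies in Section \ref{s1}. The paper's actual single-symmetrization step is different in kind: it proves the set inclusion $S(B_{\phi}(f))\subset B_{\phi}(Sf)$ (Proposition \ref{2}) directly, using the inclusion criterion of Lemma \ref{46}, the decomposition of the level sets $[f]_h$ (which are only sets of finite perimeter) into pieces $[f]_{h,i}$ with $i$ over/undergraphs, the co-area formula, the gradient identities of Lemma \ref{17}, the treatment of the set $\{\partial f/\partial x_n=0\}$ in Lemma \ref{1c}, and only the convexity of $\phi$ via (\ref{a7}) and Lemma \ref{88}; volume preservation of Steiner symmetrization then gives $|B_{\phi}(f)|\le |B_{\phi}(Sf)|$ without any projection inequality. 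To repair your proposal you would need either an independent proof of the Orlicz--Petty inequality for star bodies together with a non-homogeneous substitute for the Minkowski-problem step, or to replace Step 3 by a direct modular argument of the above type.
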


When $\phi(t)=(1-\lambda)(t)_+^p+\lambda(t)_-^p$, where $p>1$, $\lambda\in[0,1]$, $(t)_+:=\max\{t,0\}$ and $(t)_-:=\max\{-t,0\}$, the affine Orlicz P\'olya-Szeg\" o principle becomes the general affine P\'olya-Szeg\" o-type principle (established in \cite{Nguyen16}). The symmetric affine P\'olya-Szeg\" o principle \cite{CLYZ09} and the asymmetric affine P\'olya-Szeg\" o principle \cite{HSX12} correspond to the cases of $\lambda=1/2$ and $\lambda=0$, respectively.

\section{Outline of the proof}\label{s1}

The proof of the  affine  $L_p$ P\'olya-Szeg\" o principle in \cite{LYZ02} mainly  relies on the affine $L_p$ Petty projection inequality  from \cite{LYZ00} and  the solution of the normalized $L_p$ Minkowski problem \cite{Lu93}.  Haberl, Schuster and Xiao in \cite{HSX12} mainly relies on the Haberl-Schuster version of the affine isoperimetric inequality \cite{HS09} and the solution of the normalized $L_p$ Minkowski problem \cite{Lu93} to prove the asymmetric affine P\'olya-Szeg\" o principle. The discovery of the Haberl-Schuster version of the $L_p$ Petty projection inequality was made possible through the important advances in valuation theory by Ludwig \cite{Ludwig02,Ludwig05}. Inspired by the work of Haddad, Jim\'enez, and Montenegro \cite{HJM16},  Nguyen \cite{Nguyen16} made use of the general $L_p$ Busemann-Petty centroid inequality \cite{LYZ00} to prove the general affine P\'olya-Szeg\" o-type principle.

As mentioned in \cite{LYJ17}, since the function $\phi$ defining the Orlicz-Sobolev spaces is usually not multiplicative, i.e., $\phi(xy)\neq\phi(x)\phi(y)$ for $x,y\in\mathbb{R}$, and the Orlicz Minkowski problem has not been completely solved, we can not use the Orlicz-Petty projection inequality \cite{LYZ10} and the solution of the even Orlicz Minkowski problem \cite{HLYZ10} to prove the affine Orlicz P\'olya-Szeg\" o principle.  In \cite{LYJ17}, making use of the  functional Steiner symmetrization, the author gave a direct proof of the Orlicz P\'olya-Szeg\" o principle for log-concave functions.  In this paper, we still make use of the functional Steiner symmetrization to the affine Orlicz P\'olya-Szeg\" o principle. Comparing with the paper \cite{LYJ17}, in this paper we mainly overcome the following several difficulties.

Firstly, in \cite{LYJ17}, the convexity property of the level sets of a log-concave function plays a role in proving the Orlicz P\'olya-Szeg\" o principle. But for the general $W_0^{1,\Phi}(G)$  functions, the level sets of functions are sets more general than convex bodies. Thus, it is crucial to figure out the structure of the level sets of $W_0^{1,\Phi}(G)$. In this paper, we show that the level sets of $W_0^{1,\Phi}(G)$  functions are sets of finite perimeter and define the Steiner symmetrization of the level sets in a different way (see Section \ref{ss1}), which makes it possible to make use of the functional Steiner symmetrization to prove the affine Orlicz P\'olya-Szeg\" o principle.

Secondly, since log-concave functions are co-area regular, by \cite[Theorem 2]{Bu97}, the symmetric decreasing rearrangements of log-concave functions can be approximated in the strong $W^{1,p}$-topology by a sequence of Steiner symmetrizations. Therefore, the Orlicz-Sobolev affine ball operator $B_{\phi}:W_0^{1,\Phi}(G)\rightarrow \mathcal{K}_o^n$ is continuous for log-concave functions.  But for the general Sobolev functions, the Orlicz-Sobolev affine ball operator may be discontinuous for the co-area irregular functions. In this paper, making use of  the weak convergence of Steiner symmetrizations of $f\in W^{1,\phi}(\mathbb{R}^n)$, we prove that the Orlicz-Sobolev affine ball operator is weakly continuous in such a way that
 there exist a convex body $K_0$ and a sequence of successive Steiner symmetrizations $\{f_k\}_{k\geq 0}$ of $f$ such that $B_{\phi}(f_i)$ converges to $K_0$ and $K_0\subset B_{\phi}(f^{\star})$ (see Lemma \ref{72}).

Thirdly, in \cite[Lemma 5.2]{LYJ17}, for log-concave functions, the author proved that $\nabla f$ and $\nabla Sf$ are equal to the same constant in the line parallelling to $e_n$ on the boundaries of subgraphs of $f$ and $Sf$. For general Sobolev functions, without the convexity, we can not make use of Lemma 1.5.14 and Theorem 1.5.15 in \cite{Schneider13} with respect to convex functions to prove the similar conclusion. In this paper, using the more subtle method, we prove that Sobolev functions have the similar properties as log-concave functions (see Lemma \ref{1c}).

Functional Steiner symmetrization has turned out to be very fruitful in proving isoperimetric theorems in analysis and function theory (see. e.g., \cite{Bianchi17,Bu97,Bu04,Capriani14,Ci00,CCF05,CF06,CF0602,FMP08,Lin17,LYJ17,Trudinger97}, and the references therein). In the beautiful paper \cite{CF06}, Cianchi and Fucso analyzed the case of equality  in Steiner symmetrization inequalities for Dirichlet-type integrals. Specially, Cianchi and Fucso \cite{CF06} proved the following P\'olya-Szeg\" o principle for $f\in W_0^{1,1}(G)$:
$$\int_{SG}\psi(\nabla Sf)dx\leq \int_{G}\psi(\nabla f)dx,$$
where $Sf$ is the Steiner symmetrization of $f$ and $\psi$ is a convex function from $\mathbb{R}^n$ into $[0,+\infty)$, vanishing at $0$. Comparing with Cianchi-Fucso's method---using the Steiner symmetrization of one-dimensional restrictions of Sobolev functions and Fubini's theorem, we mainly make use of the Steiner symmetrization of the level sets and the co-area formula.

\section{Notation and preliminary results}\label{s2}

 The setting will be Euclidean $n$-space $\mathbb{R}^{n}$ with origin $o$.  We write $e_1,\dots, e_{n-1},e_n$ for the standard orthonormal basis of $\mathbb{R}^n$ and when we write $\mathbb{R}^n=\mathbb{R}^{n-1}\times\mathbb{R}$ we always assume that $e_n$ is associated with the last factor.

 We will attempt to use $x, y$ for vectors in $\mathbb{R}^n$ and $x^{\prime}$, $y^{\prime}$ for vectors in $\mathbb{R}^{n-1}$, and $u,v\in S^{n-1}$ for unit vectors in $\mathbb{R}^n$.  Let $u^{\perp}$ denote the $n$-dimensional linear subspace orthogonal to $u$ in $\mathbb{R}^n$.  Let $l_{u}$ denote the  line that passes through the origin and is parallel to $u$.  We will use $a,b,s,t,\alpha$ for numbers in $\mathbb{R}$ and $c,\lambda$ for strictly positive reals. If $Q$ is a Borel subset of $\mathbb{R}^n$ and $Q$ is contained in an $i$-dimensional affine subspace of $\mathbb{R}^n$ but in no affine subspace of lower dimension, then
$|Q|$ will denote the $i$-dimensional Lebesgue measure of $Q$.  Let $\mathcal{H}^n$ denote the $n$-dimensional Hausdorff measures. Let $B_n$ denote the Euclidean unit ball in $\mathbb{R}^n$. Let $\omega_n$ denote the $n$-dimensional volume of the unit ball in $\mathbb{R}^n$.  If $x\in\mathbb{R}^n$ then by abuse of notation we will write $|x|=\sqrt{x\cdot x}$.

For $A\in SL(n)$ write $A^t$ for the transpose of $A$ and $A^{-t}$ for the inverse of the transpose of $A$. We write $|A|$ for the absolute value of the determinant of $A$.

\subsection{Convex bodies and star bodies}

In this section we fix our notation and collect basic facts from convex geometry.
General references for the theory of convex bodies are the books by  Gardner \cite{Ga06}, Gruber \cite{Gruber07}, Schneider \cite{Schneider13}. We write $\mathcal{K}^n$ for the set of convex bodies (compact convex subsets) of $\mathbb{R}^n$. We write $\mathcal{K}^n_o$ for the set of convex bodies that contain the origin in their interiors. A compact set $K\subset\mathbb{R}^n$ is a star-shaped set (with respect to the origin) if the intersection of every straight line through the origin with $K$ is a line segment. Let $K\subset\mathbb{R}^n$ be a compact star shaped set (with respect to the origin), the {\it radial function} $\rho(K,\cdot):\mathbb{R}^n\backslash\{0\}\rightarrow\mathbb{R}$ is defined by
\begin{eqnarray}\label{2b}
\rho(K,x)=\rho_K(x)=\max\{\lambda\geq 0:\; \lambda x\in K\}.
 \end{eqnarray} If $\rho_K$ is strictly positive and continuous, then we call $K$ a star body (with respect to the origin). In what follows we will denote the class of star bodies (with respect to the origin) in $\mathbb{R}^n$ by $\mathcal{S}_o^n$.

For $K\in \mathcal{K}^n$, let $h(K;\cdot)=h_K:\mathbb{R}^n\rightarrow\mathbb{R}$ denote the {\it support function} of $K$; i.e.,
$$h(K;x):=\max\{x\cdot y:\;y\in K\}.$$
Thus, if $y\in\partial K$, then
\begin{eqnarray}\label{4a}
h_K(v_K(y))=v_K(y)\cdot y,
\end{eqnarray}
where $v_K(y)$ denotes an outer unit normal to $K$ at $y\in\partial K$.

For $K,L\in\mathcal{K}^n$, the {\it Hausdorff distance} of $K$ and $L$ is defined by
\begin{eqnarray}
\delta(K,L):=\sup_{u\in S^{n-1}}|h_{K}(u)-h_{L}(u)|.
\end{eqnarray}

If $K\in\mathcal {K}_o^n$, then the {\it polar body} $K^{\ast}$ is defined by
$$K^{\ast}:=\{x\in\mathbb{R}^n:x\cdot y\leq 1\;{\rm for\;all}\;y\in K\}.$$

For $K\in\mathcal{K}_o^n$, it is easily verified that
\begin{eqnarray}\label{41}
h_{K^{\ast}}=1/\rho_K\;\;\;{\rm and}\;\;\;\rho_{K^{\ast}}=1/h_K.
\end{eqnarray}

For $K\in \mathcal {S}_o^n$, the function $g_K(\cdot)=\|\cdot\|_K:\mathbb{R}^n\rightarrow[0,\infty)$ defined by
\begin{eqnarray}\label{41b}
g_K(x)=\|x\|_K=\inf\{\lambda>0:\;x\in\lambda K\}
\end{eqnarray}
is called the {\it gauge function} of $K$.

By (\ref{2b}) and (\ref{41b}), it is clear that for $K\in\mathcal{S}_o^n$,
\begin{eqnarray}\label{41a}
g_K=1/\rho_{K}.
\end{eqnarray}
By (\ref{41b}), for $x\in\mathbb{R}^n$ and $K\in\mathcal{S}_o^n$,  it follows immediately that
\begin{eqnarray}\label{41c}
g_K(x)=1\;\;{\rm if\;and\;only\;if}\;\;x\in\partial K.
\end{eqnarray}

Let $K\in\mathcal{S}_o^n$ be a star body whose radial function is positive and locally Lipschitz continuous.  Since $g_K=\frac{1}{\rho_K}$ and  there exist $0<c_1<c_2$ such that $c_1<\rho_K(u)<c_2$ for any $u\in S^{n-1}$, $g_K$ is positive and  locally Lipschitz continuous on $\mathbb{R}^n$. Therefore, for almost all $y\in \partial K$, the gradient $\nabla g_K(y)$ exists. For $y\in\partial K$, $g_K(y)=1$. Differentiating this equation gives $\nabla g_K(y)\cdot dy=0$. Thus, $\nabla g_K(y)$ is orthogonal to
the tangent hyperplane of $\partial K$ at $y$. For $r>0$, differentiating the equation $g_K(ry)=rg_K(y)$ with respect to $r$ gives
\begin{eqnarray}\label{2e}
y\cdot\nabla g_K(y)=g_K(y).
\end{eqnarray}
Therefore, $|\nabla g_K(y)|\neq 0$. The unit outer normal vector $v_K(y)$ to $K$ at $y\in\partial K$ is given by
\begin{eqnarray}\label{2d}
v_K(y)=\frac{\nabla g_K(y)}{|\nabla g_K(y)|}.
\end{eqnarray}
Moreover, for $y\in\partial K$,  by (\ref{2d}), (\ref{2e}) and $g_K(y)=1$, we have
\begin{eqnarray}\label{2f}
y\cdot v_K(y)=y\cdot \frac{\nabla g_K(y)}{|\nabla g_K(y)|}=\frac{1}{|\nabla g_K(y)|}.
\end{eqnarray}
By (\ref{2d}) and (\ref{2f}), for $y\in \partial K$, we have
\begin{eqnarray}\label{2h}
\nabla g_K(y)=\frac{v_K(y)}{y\cdot v_K(y)}.
\end{eqnarray}
Moreover, since that $g_K(ry)=rg_K(y)$ for $r>0$, setting $x=ry$, we obtain
\begin{eqnarray}\label{2i}
\nabla g_K(x)=\nabla g_K(y).
\end{eqnarray}

\subsection{$BV(\mathbb{R}^n)$ functions and sets of finite perimeter}
In this section, we review some basic definitions and facts about functions of bounded variation on $\mathbb{R}^n$.  Good general references for this are Ambrosio, Fusco and Pallara \cite{AFP00}, Evans and Gariepy\cite{EG92}, Ziemer\cite{Ziemer89}.

Let $C_c^1(\mathbb{R}^n)$ stand for the compactly supported continuously differentiable functions on $\mathbb{R}^n$. Let $f\in L^1(\mathbb{R}^n)$; we say that $f$ is a function of bounded variation of $\mathbb{R}^n$ if the weak derivative of $f$ is representable by a finite Radon measure on $\mathbb{R}^n$, i.e. if
$$\int_{\mathbb{R}^n}f\frac{\partial \phi}{\partial x_i}dx=-\int_{\mathbb{R}^n}\phi d D_i f\;\;{\rm for\;all\;}\phi\in C_{c}^{1}(\mathbb{R}^n),\;i=1,\dots,n$$
for some $\mathbb{R}^n$-valued measure $Df=(D_1f,\dots,D_nf)$ on $\mathbb{R}^n$. The vector space of all functions of bounded variation in $\mathbb{R}^n$ is denoted by $BV(\mathbb{R}^n)$.

A Borel set $E\subset\mathbb{R}^n$ is said to have {\it finite perimeter in an open set} $\Omega$ provided that the characteristic function of $E$, $\mathcal{X}_E$, is a function of bounded variation in $\Omega$. Thus, the partial derivatives of $\mathcal{X}_E$ are Radon measures in $\Omega$ and the perimeter of $E$ in $\Omega$, $P(E,\Omega)$, is defined as
$$P(E,\Omega)=\|D\mathcal{X}_E\|(\Omega).$$
A set $E$ is said to be of {\it locally finite perimeter} if $P(E,\Omega)<\infty$ for every bounded open set $\Omega$. If $E$ is of finite perimeter in $\mathbb{R}^n$, it is simply called a set of {\it finite perimeter}. Usually, we write $P(E)$ instead of $P(E,\mathbb{R}^n)$.

Let  $\mathcal{C}^n$ denote the set of  sets  of locally finite perimeter. By ${\rm cl}E$, ${\rm int}E$ and $\partial E$ we denote, respectively, the closure, interior and boundary of  $E\in \mathcal{C}^n$. Let $E^{\prime}_u$ denote the image of the orthogonal projection of $E$ onto $u^{\perp}$. The {\it essential projection} of a set $E\subset\mathcal{C}^n$ onto $u^{\perp}$ is defined as
$$E_u^{\prime,+}=\{x^{\prime}\in u^{\perp}:\;|(l_u+x^{\prime})\cap E|>0\}.$$

For $E,F\in \mathcal{C}^n$, the {\it symmetric difference} of $E$ and $F$ is
$$E\triangle F=(E\backslash F)\cup (F\backslash E).$$

Let $B(x,r)$ denote the ball, centered at $x$, having radius $r$. Let $E$ be of locally finite perimeter. The {\it reduced boundary} of $E$, $\partial^{\ast}E$, consists of all points $x\in\mathbb{R}^n$ for which the following hold:

(i) $\|D\mathcal{X}_E\|[B(x,r)]>0$ for all $r>0$,

(ii) If $v_{E,r}(x):=-D\mathcal{X}_E[B(x,r)]/\|D\mathcal{X}_E\|[B(x,r)]$, then the limit $v_E(x)=\lim_{r\rightarrow 0}v_{E,r}(x)$ exists with $|v_E(x)|=1$.

 $v_E(x)$ is called the {\it generalized exterior normal} to $E$ at $x\in\partial^{\ast}E$. By \cite[Theroem 3.59]{AFP00}, if $E\in \mathcal{C}^n$, $\partial^{\ast} E$ is $n-1$-rectifiable, i.e., it can be covered, except for an $\mathcal{H}^{n-1}$-negligible subset, by countably many $(n-1)$-dimensional surfaces of class $C^1$.

If $E\subset\mathbb{R}^n$ is a Lebesgue measurable set, the {\it measure-theoretic boundary of} $E$ is defined by
$$\partial^ME=\left\{x:\limsup_{r\rightarrow 0}\frac{|B(x,r)\cap E|}{r^n}>0\;{\rm and}\;\limsup_{r\rightarrow 0}\frac{|B(x,r)- E|}{r^n}>0\right\}.$$
By Federer's structure theorem (see, e.g.,  \cite[Lemma 1 on Page 208]{EG92} and \cite[Theroem 3.61]{AFP00}), if $E\in\mathcal{C}^n$, then $\partial^{\ast}E\subset\partial^M E$ and $\mathcal{H}^{n-1}(\partial^ME-\partial^{\ast}E)=0$.

For  $E\in\mathcal{C}^n$, let
$$D_E:=\sup\{|x-y|:\;x,y\in E\}$$
denote the {\it diameter} of $E$. Since $E$ is obviously contained in the right cylinder whose base is $E^{\prime}_u$ and whose height is $D_E$, then we have the crude estimate
\begin{eqnarray}\label{3e}
\int_{\partial^{\ast} E}(u\cdot v_E(y))_+d\mathcal{H}^{n-1}(y)\geq\frac{|E|}{D_E}.
\end{eqnarray}

\subsection{Orlicz-Sobolev spaces}

In this section we summarize the necessary definitions and results about Orlicz-Sobolev spaces.
 For a detailed account of these facts, the reader could consult the books of Maz'ya \cite{Ma85,Ma11} and the paper of Cianchi \cite{Ci10}.

Let $\phi\in \mathcal{N}$ and $\Phi(t)=\max\{\phi(t),\phi(-t)\}$, $t\in[0,\infty)$. Let $G$ be an open subset of $\mathbb{R}^n$. The Orlicz space $L^{\Phi}(G)$ is defined as
\begin{eqnarray}\label{33a}
L^{\Phi}(G)&=&\left\{f:\;f \;{\rm is\;a\;Lebesgue\;measurable\;real\;valued\;function\;on}\;G\right.\nonumber\\
&~&\left.{\rm such\;that}\;\int_{G}\Phi\left(\frac{|f(x)|}{\lambda}\right)dx<\infty\;{\rm for\;some\;}\lambda>0\right\}.
\end{eqnarray}

The Luxemburg norm $\|f\|_{L^{\Phi}(G)}$ is defined as
\begin{eqnarray}\label{33}
\|f\|_{L^{\Phi}(G)}=\inf\left\{\lambda>0:\int_{G}\Phi\left(\frac{|f(x)|}{\lambda}\right)dx\leq 1\right\}.
\end{eqnarray}
The space $L^{\Phi}(G)$, equipped with the norm $\|\cdot\|_{L^{\Phi}}$, is a Banach space. Note that, if $\Phi(s)=s^p$ and $p> 1$, then $L^{\Phi}(G)=L^p(G)$, the usual $L_p$ space, and $\|\cdot\|_{L^{\Phi}(G)}=\|\cdot\|_{L^{p}(G)}$. Usually, we write $\|\cdot\|_{\Phi}$ instead of $\|\cdot\|_{L^{\Phi}}$.

The first order Orlicz-Sobolev space $W^{1,\Phi}(G)$ is defined as
\begin{eqnarray}\label{6b}
W^{1,\Phi}(G)=\{f\in L^{\Phi}(G): f\;{\rm is}\;{\rm weakly}\;{\rm differentiable}\;{\rm and\;}|\nabla f|\in L^{\Phi}(G)\}.
\end{eqnarray}
Here, $\nabla$ denotes the weak gradient. The space $W^{1,\Phi}(G)$, equipped with the norm
\begin{eqnarray}
\|f\|_{W^{1,\Phi}(G)}=\|f\|_{\Phi}+\||\nabla f|\|_{\Phi},
\end{eqnarray}
is a Banach space. Clearly, $W^{1,\Phi}(G)=W^{1,p}(G)$, the standard Sobolev space, if $\Phi(s)=s^p$ with $p>1$.

$W_0^{1,\Phi}(G)$ will denote the subspace of $W^{1,\Phi}(G)$ of those functions whose extension by $0$ outside $G$ belongs to $W^{1,\Phi}(\mathbb{R}^n)$. If $f\in W_0^{1,\Phi}(G)$, then $f$ vanishes on the boundary of $G$ and we let
\begin{equation}\label{3a}
\bar{f}(x)=\left \{ \begin{aligned}
&f(x),&x\in G &\\
& 0,& x\in\mathbb{R}^n/G,&
 \end{aligned} \right.
 \end{equation}
 then $\bar{f}\in W^{1,\Phi}(\mathbb{R}^n)$.
Throughout this paper,  $\bar{f}$ will denote the extension of $f$ by $0$ outside $G$.

 For the nonnegative function $f\in W_0^{1,\Phi}(G)$, we define the {\it subgraph} of $f$ by
\begin{eqnarray}
{\rm sub}f:=\{(x,x_{n+1})\in\mathbb{R}^{n+1}:x\in G,\;0<x_{n+1}<f(x)\}.
\end{eqnarray}
And we define its {\it superlevel sets} by
\begin{eqnarray}
[f]_h:=\{x\in G:\;f(x)\geq h\},\;h>0.
\end{eqnarray}

\begin{lem}
If $f\in W_0^{1,\Phi}(G)$, then for $\mathcal{L}^1$-a.e. $h>0$, $[f]_h$ is a set of finite perimeter.
\end{lem}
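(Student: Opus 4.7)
The plan is to reduce the claim to the standard coarea formula for $BV$ functions, via the observation that $f$ lies in $W^{1,1}$ on the bounded set $G$.

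First I would note that the extension $\bar f$ belongs to $W^{1,\Phi}(\mathbb{R}^n)$ by the definition of $W_0^{1,\Phi}(G)$, and its essential support is contained in the closure of the bounded set $G$. I then want to upgrade the integrability: because $\Phi$ is convex with $\Phi(0)=0$ and $\Phi(t)/t\to\infty$ as $t\to\infty$, for any $M>0$ there is a threshold $T_M$ with $t\le T_M+\Phi(t)/M$ for all $t\ge0$. Applied to $|\bar f|/\lambda$ and $|\nabla\bar f|/\lambda$, together with the boundedness of $G$, this yields $\bar f\in L^1(\mathbb{R}^n)$ and $|\nabla\bar f|\in L^1(\mathbb{R}^n)$, so that $\bar f\in W^{1,1}(\mathbb{R}^n)\subset BV(\mathbb{R}^n)$.

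Next I would invoke the coarea formula for $BV$ functions (see, e.g., \cite{AFP00,EG92}), which gives
\begin{eqnarray*}
\int_{\mathbb{R}^n}|\nabla\bar f|\,dx=\|D\bar f\|(\mathbb{R}^n)=\int_{-\infty}^{+\infty}P\bigl(\{\bar f>t\},\mathbb{R}^n\bigr)\,dt<\infty.
\end{eqnarray*}
Since the integrand is non-negative and the integral is finite, $P(\{\bar f>t\})<\infty$ for $\mathcal{L}^1$-a.e.\ $t\in\mathbb{R}$. For $h>0$ this set coincides (inside $\mathbb{R}^n$) with $\{x\in G:f(x)>h\}$.

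Finally I would pass from strict to non-strict superlevel sets. Since $f\in L^1(G)$, the function $h\mapsto|\{f=h\}|$ is non-zero on at most a countable set; hence for $\mathcal{L}^1$-a.e.\ $h>0$ the sets $[f]_h=\{f\ge h\}$ and $\{f>h\}$ differ only by an $\mathcal{L}^n$-null set, and sets that agree up to an $\mathcal{L}^n$-null set have the same total variation of the characteristic function. Combining this with the previous step gives $P([f]_h)<\infty$ for a.e.\ $h>0$, as claimed. The only real subtlety is the Orlicz-to-$L^1$ embedding on $G$; everything else is an immediate application of the $BV$ coarea formula.
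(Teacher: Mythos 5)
Your proof is correct and follows essentially the same route as the paper: reduce to $\bar f\in W^{1,1}(\mathbb{R}^n)\subset BV(\mathbb{R}^n)$ and apply the $BV$ coarea formula (the paper cites \cite[Lemma 3.3]{LYJ17} for the embedding and \cite[Theorem 3.40]{AFP00} for the coarea step). Your only additions are a self-contained proof of the Orlicz-to-$L^1$ embedding and the careful passage from $\{f>h\}$ to $[f]_h=\{f\ge h\}$, both of which are fine.
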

\begin{proof}
By \cite [Lemma 3.3] {LYJ17}, if $f\in W_0^{1,\Phi}(G)$, then $\bar{f}\in W^{1,1}(\mathbb{R}^n)$. Since Sobolev space $W^{1,1}(\mathbb{R}^n)$ is the subspace of $BV(\mathbb{R}^n)$ (see e.g. \cite[Page 118]{AFP00}), $\bar{f}\in BV(\mathbb{R}^n)$. By \cite [Theorem 3.40]{AFP00}, if $f\in BV(\mathbb{R}^n)$, then the superlevel set $[f]_h$ is of finite perimeter for $\mathcal{L}^1$-a.e. $h>0$.
\end{proof}

We shall make use of the fact that for $\phi \in \mathcal{N}$, $a_i\in \mathbb{R}$ and $b_i>0$, $i=1,2,\cdots,m$,
\begin{eqnarray}\label{a7}
\sum_{i=1}^{m}b_i\cdot\phi\left(\frac{\sum_{i=1}^{m}a_i}{\sum_{i=1}^{m}b_i}\right)\leq \sum_{i=1}^{m}\left(b_i\phi\left(\frac{a_i}{b_i}\right)\right).
\end{eqnarray}
This is a trivial consequence of the convexity of $\phi$.

We shall make use of the following clear fact.
\begin{lem}\label{88}
If $\phi\in\mathcal{N}$, for $a,b\in\mathbb{R}$ and $a\neq 0$, then the function
\begin{eqnarray}\label{6}
\Psi(t):=\phi(at-b)+\phi(-at-b),\;t>0
\end{eqnarray}
is  increasing.
\end{lem}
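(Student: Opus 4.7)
The plan is to reduce to $a>0$ and then invoke the elementary convexity fact that, for any convex $\phi$ on $\mathbb{R}$, the function $h\mapsto \phi(x+h)+\phi(x-h)$ is nondecreasing on $[0,\infty)$; applied with $x=-b$ and $h=at$ this immediately gives monotonicity of $\Psi$. Since swapping $a\mapsto -a$ merely permutes the two summands of $\Psi(t)$, I may assume $a>0$ from the outset.

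Fix $0<t_1<t_2$ and set
$$x=-at_2-b,\qquad y=-at_1-b,\qquad z=at_1-b,\qquad w=at_2-b.$$
The inequalities $x\le y\le z\le w$ follow from $a>0$ together with $t_1>0$ (the middle inequality $y\le z$ is exactly $t_1\ge 0$), and the identity $y+z=x+w=-2b$ is immediate. Hence $y$ and $z$ are symmetric about the midpoint of $[x,w]$, so there is a single $\lambda\in[0,1]$ with $y=\lambda x+(1-\lambda)w$ and $z=(1-\lambda)x+\lambda w$. Applying convexity of $\phi$ to each of these representations and adding yields
$$\phi(y)+\phi(z)\le\phi(x)+\phi(w),$$
which is precisely $\Psi(t_1)\le\Psi(t_2)$.

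The argument is almost purely a consequence of convexity, so there is no real obstacle here; in particular, neither the normalization $\phi(0)=0$ nor the one-sided monotonicity built into the definition of $\mathcal{N}$ enters the proof. I would use the four-point majorization above rather than manipulating left/right derivatives of $\phi$, since it avoids any concerns about smoothness of $\phi$ at the points $\pm b$ and keeps the proof to a single line of convexity.
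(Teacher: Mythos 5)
Your proof is correct. The reduction to $a>0$ by permuting the two summands is fine, the four points $x<y<z<w$ with $x+w=y+z=-2b$ are correctly identified, and the two-sided convexity inequality $\phi(y)+\phi(z)\le\phi(x)+\phi(w)$ (the standard ``spreading increases the sum'' fact for convex functions) is exactly what is needed. Your observation that only convexity is used --- neither $\phi(0)=0$ nor the one-sided strict monotonicity in the definition of $\mathcal{N}$ plays any role --- is accurate.

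For comparison: the paper gives no proof at all, simply labelling the statement ``the following clear fact.'' So there is nothing to contrast against, but the argument you give is the canonical one and is what any reader would supply. One small terminological point worth being aware of: your four-point argument yields that $\Psi$ is \emph{nondecreasing} on $(0,\infty)$ (equality can occur, e.g.\ if $\phi$ is affine on the whole interval $[x,w]$); the paper writes ``increasing,'' which in this context should be read in the weak sense. That is all the paper needs --- Lemma \ref{88} is invoked in the chain (\ref{54}) only to compare $\Psi(1)$ with $\Psi(i)$ for an integer $i\ge1$, so the non-strict inequality suffices.
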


\section{Steiner symmetrization}\label{s3}
\subsection{Steiner symmetrization of compact sets}

Without loss of generality, we only need to consider the  Steiner symmetrization in the direction $e_n$.

Given any compact subset $E$ of $\mathbb{R}^n$, define, for $x^{\prime}\in\mathbb{R}^{n-1}$,
\begin{eqnarray}
E_{x^{\prime}}:=\{x_n\in\mathbb{R}:(x^{\prime},x_n)\in E\}.
\end{eqnarray}
Hereafter, $\mathcal{L}^m$ denote the outer Lebesgue measure in $\mathbb{R}^m$ and $E^{\prime}$ denote the image of the orthogonal projection of $E$ onto $e_n^{\perp}$. Then, we define the {\it Steiner symmetral} $S E$ of $E$ about the hyperplane $e_n^{\perp}$ as
$$S E:=\{(x^{\prime},x_n)\in\mathbb{R}^n:x^{\prime}\in E^{\prime},\;|x_n|\leq \mathcal{L}^1(E_{x^{\prime}})/2\}.$$
Moreover, Steiner symmetrization preserves volume, i.e., $|SE|=|E|$.

When considering the convex body $K\in\mathcal {K}_o^n$ as $K\subset \mathbb{R}^{n-1}\times\mathbb{R}$,  the {\it Steiner symmetral}, $S K$, of $K$ in the direction $e_n$ is given by
\begin{eqnarray}\label{44}
SK=\left\{\left(x^{\prime},\frac{1}{2}t+\frac{1}{2}s\right)\in\mathbb{R}^{n-1}\times\mathbb{R}:(x^{\prime},t),(x^{\prime},-s)\in K\right\},
\end{eqnarray}
and its boundary is given by
\begin{eqnarray}\label{45}
\partial SK=\left\{\left(x^{\prime},\frac{1}{2}t+\frac{1}{2}s\right)\in\mathbb{R}^{n-1}\times\mathbb{R}:(x^{\prime},t),(x^{\prime},-s)\in\partial K\;{\rm with}\;t\neq-s\right\}.
\end{eqnarray}

In this paper, we shall make critical use of the following fact that follows directly from (\ref{44}), (\ref{45}), and (\ref{41c}).
\begin{lem}\label{46}
Suppose $K,L\in\mathcal {K}_o^n$ and consider $K,L\subset\mathbb{R}^{n-1}\times\mathbb{R}$. Then
$$SK\subset L,$$
if and only if
$$\|(x^{\prime},t)\|_K=1=\|(x^{\prime},-s)\|_{K},\;\; with\;t\neq-s\;\;\Longrightarrow \;\;\|(x^{\prime},t/2+s/2)\|_{L}\leq 1.$$
\end{lem}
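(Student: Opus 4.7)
The plan is to combine the explicit description of $\partial SK$ in (\ref{45}), the identity (\ref{41c}) that translates $\|x\|_K = 1$ into $x \in \partial K$ (and, more generally, $\|x\|_K \le 1$ into $x \in K$ for $K \in \mathcal{K}_o^n$), together with the convexity and closedness of $L$. The forward implication is essentially a direct reading: assuming $SK \subset L$, for any $(x', t), (x', -s) \in \partial K$ with $t \neq -s$, formula (\ref{45}) puts $(x', t/2 + s/2) \in \partial SK \subset SK \subset L$, which is equivalent to $\|(x', t/2+s/2)\|_L \le 1$. This direction requires no real work beyond citing the two formulas.

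For the reverse implication I would fix an arbitrary $(y', y_n) \in SK$ and aim to show $(y', y_n) \in L$. By convexity of $K$, the fiber above $y'$ is a compact interval $K_{y'} = [a_1, a_2]$, and (\ref{44}) gives $y_n \in \bigl[-(a_2-a_1)/2,\,(a_2-a_1)/2\bigr]$. If $y' \in {\rm int}(K')$, then $a_1 < a_2$ and both $(y', a_1), (y', a_2)$ lie on $\partial K$. Applying the hypothesis first with $(t, -s) = (a_2, a_1)$ and then with $(t, -s) = (a_1, a_2)$---both legal since $t \neq -s$ reduces to $a_1 \neq a_2$---yields $(y', \pm(a_2-a_1)/2) \in L$. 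Convexity of $L$ then places the whole vertical segment $\{y'\} \times \bigl[-(a_2-a_1)/2,\,(a_2-a_1)/2\bigr]$ inside $L$, and in particular $(y', y_n) \in L$.

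The only delicate point, which is the main (essentially only) obstacle, occurs when $y' \in \partial K'$: the fiber collapses to a point, forcing $t = -s$ and making the hypothesis vacuous at that $y'$. I would dispatch this by a continuity argument, approximating $y'$ by a sequence $y'_k \in {\rm int}(K')$; the previous step gives $(y'_k, 0) \in L$, and closedness of $L$ then forces $(y', 0) \in L$, completing the inclusion $SK \subset L$. The whole proof is thus a short unwinding of (\ref{44}) and (\ref{45}) plus one convexity/closedness step for $L$.
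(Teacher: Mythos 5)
Your forward direction is fine: the hypothesis plus (\ref{41c}) puts $(x',t),(x',-s)\in\partial K\subset K$, and then (\ref{44}) (you do not even need (\ref{45})) gives $(x',t/2+s/2)\in SK\subset L$, i.e.\ $\|(x',t/2+s/2)\|_L\le 1$. The gap is in the reverse direction, in your treatment of $y'\in\partial K'$. You assert that there the fiber $K_{y'}$ collapses to a point; that is false in general. Take $K=[-1,1]^n\in\mathcal{K}_o^n$: for $y'$ on the boundary of the square $[-1,1]^{n-1}$ the fiber is the full segment $[-1,1]$, and $SK=K$ contains points $(y',y_n)$ with $y'\in\partial K'$ and $y_n\neq 0$. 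Such points are covered by neither of your cases: Case~1 excludes them by the hypothesis $y'\in{\rm int}(K')$, and your continuity argument only produces $(y'_k,0)\in L$ and hence $(y',0)\in L$, not $(y',y_n)\in L$. So, as written, the inclusion $SK\subset L$ is not established on the part of $SK$ lying over $\partial K'$.

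The repair is small, and worth stating so the argument closes. Interiority of $y'$ is never really used in your Case~1: all that is needed is $a_1<a_2$, and then $(y',a_1),(y',a_2)$ automatically lie on $\partial K$ (each is in $K$ and is a limit of points $(y',a_2+\varepsilon)$, resp.\ $(y',a_1-\varepsilon)$, outside $K$), so the two applications of the hypothesis plus convexity of $L$ handle every non-degenerate fiber, wherever $y'$ sits in $K'$. The only genuinely degenerate case is $a_1=a_2$, where the $SK$-fiber is $\{0\}$ and your approximation by $y'_k\in{\rm int}(K')$ together with closedness of $L$ does the job; alternatively, approximate an arbitrary point of $SK$ by points of ${\rm int}(SK)$ (whose projections lie in ${\rm int}(K')$) and invoke closedness of $L$ once. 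For comparison, the paper offers no proof at all: it records the lemma as following directly from (\ref{44}), (\ref{45}) and (\ref{41c}), so the detailed unwinding you attempt is exactly what is expected — it just needs the case split keyed to degeneracy of the fiber rather than to ${\rm int}(K')$ versus $\partial K'$.
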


\subsection{Steiner symmetrization of functions}

Let $\mathcal{X}_A(x)$ denote the characteristic function of $A\subset \mathbb{R}^n$, i.e.,
\begin{equation}
\mathcal{X}_A(x):=\left \{ \begin{aligned}
&1&&{\rm if}\;x\in A, \\
& 0&& {\rm if}\;x\in\mathbb{R}^n\backslash A.
 \end{aligned} \right.
 \end{equation}

\begin{definition}\label{36}
For nonnegative function $f\in W_0^{1,\Phi}(G)$,  the {\it Steiner symmetrization}, $S f$,  of $f$ with respect to $e_n$ is defined as
\begin{eqnarray}\label{2}
Sf(x)=\int_{0}^{\infty}\mathcal {X}_{S[f]_h}(x)dh, \;\;x\in SG.
\end{eqnarray}
\end{definition}

By Definition \ref{36}, it is easily checked that
\begin{eqnarray}\label{4c}
[Sf]_h\;{\rm is \; equivalent\;to}\;S[f]_h,\;\;{\rm for}\;h>0.
\end{eqnarray}
Thus, we have for the subgraphs
\begin{eqnarray}\label{3f}
{\rm sub}(Sf)\;{\rm is \; equivalent\;to}\;S({\rm sub}f).
\end{eqnarray}
The following lemma was given in \cite[Theorem 2.1]{CF06}.
\begin{lem}\label{3i}
If $f\in W_0^{1,\Phi}(G)$, then $Sf\in W_0^{1,\Phi}(SG)$.
\end{lem}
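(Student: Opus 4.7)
The plan is to verify the two defining conditions of $W_0^{1,\Phi}(SG)$ separately: that $Sf \in L^\Phi(SG)$ and that $Sf$ admits a weak gradient with $|\nabla Sf| \in L^\Phi(SG)$, and then to check that the zero-extension $\overline{Sf}$ lies in $W^{1,\Phi}(\mathbb{R}^n)$. The overall strategy is to reduce first to the classical $W^{1,1}$ Steiner-symmetrization theorem of Cianchi and Fusco and then to upgrade integrability using the fact that Steiner symmetrization preserves the distribution function.

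First I would observe that since $\Phi$ is super-linear at infinity and $G$ is bounded, $L^\Phi(G)$ embeds continuously in $L^1(G)$, so $f \in W_0^{1,1}(G)$ and its zero extension satisfies $\bar f \in W^{1,1}(\mathbb{R}^n)$ by \cite[Lemma 3.3]{LYJ17}. The classical Steiner-symmetrization theorem for $W^{1,1}$ functions from \cite{CF06} then supplies $\overline{Sf} \in W^{1,1}(\mathbb{R}^n)$; in particular $Sf$ is weakly differentiable on $SG$, and its zero extension is again a $W^{1,1}$ function supported in $\overline{SG}$.

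Next I would handle the Orlicz norms. Since $|[Sf]_h| = |[f]_h|$ for every $h > 0$, the layer-cake representation yields
\begin{equation*}
\int_{SG} \Phi\!\left(\frac{Sf}{\lambda}\right) dx = \int_0^\infty \frac{1}{\lambda}\Phi'\!\left(\frac{t}{\lambda}\right) \bigl|[Sf]_t\bigr|\, dt = \int_G \Phi\!\left(\frac{f}{\lambda}\right) dx
\end{equation*}
for every $\lambda > 0$, so $\|Sf\|_\Phi = \|f\|_\Phi < \infty$ and $Sf \in L^\Phi(SG)$. For the gradient I would invoke the Orlicz P\'olya-Szeg\"o inequality for Steiner symmetrization,
\begin{equation*}
\int_{SG} \Phi\!\left(\frac{|\nabla Sf|}{\lambda}\right) dx \leq \int_G \Phi\!\left(\frac{|\nabla f|}{\lambda}\right) dx,
\end{equation*}
valid for every $\lambda > 0$; choosing $\lambda$ large enough that the right-hand side is at most $1$ yields $|\nabla Sf| \in L^\Phi(SG)$.

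The main obstacle is the Orlicz P\'olya-Szeg\"o step, which goes beyond the classical $L^p$ version. The standard route uses the coarea formula to write $\int_G \Phi(|\nabla f|)$ as an integral over the level sets $\partial^\ast [f]_t$ and then applies the isoperimetric-type comparison $P([Sf]_t, SG) \leq P([f]_t, G)$ together with a convexity estimate to bound the symmetrized integrand. Fortunately Cianchi and Fusco establish precisely the comparison $\int_{SG} \psi(\nabla Sf)\, dx \leq \int_G \psi(\nabla f)\, dx$ for any convex $\psi:\mathbb{R}^n \to [0,\infty)$ with $\psi(0)=0$, and applying their theorem with $\psi(\xi) := \Phi(|\xi|/\lambda)$ delivers the required inequality at once, completing the proof.
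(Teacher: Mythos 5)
Your proposal is correct and follows essentially the same route as the paper, which proves this lemma simply by citing \cite[Theorem 2.1]{CF06}: you reduce to the same Cianchi--Fusco result, obtaining the $W^{1,1}$ membership of $Sf$ and then the gradient bound by taking $\psi(\xi)=\Phi(|\xi|/\lambda)$ in their convex-integrand inequality, with equimeasurability handling $\|Sf\|_\Phi=\|f\|_\Phi$. The only difference is that you spell out these routine verifications explicitly, which the paper leaves to the citation.
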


\subsection{The steiner symmetrization of the level sets of $W_0^{1,\Phi}(G)$ functions}\label{ss1}

For $f\in W_0^{1,\Phi}(G)$ and any $h>0$, $[f]_h$ is a set of finite perimeter. Let $[f]^{\prime}_h$ denote the image of orthogonal projection of $[f]_h$ onto $e^{\perp}_n$. For a fixed $x^{\prime}\in [f]_h^{\prime}$, let $[f]_h(x^{\prime})$ denote the set $\{x_n\in\mathbb{R}:(x^{\prime},x_n)\in [f]_h\}$. Let
\begin{eqnarray}\label{5a}
N([f]_h,x^{\prime}):=\frac{1}{2}\mathcal{H}^0\left(\partial^{\ast}([f]_h(x^{\prime}))\right),
\end{eqnarray}
where $\mathcal{H}^0$ is the counting measure. Since $[f]_h$ is a set of finite perimeter, by \cite[Theorem D]{CF06}, for $\mathcal{L}^{n-1}$-a.e. $x^{\prime}\in [f]^{\prime}_h$, $\mathcal{H}^0\left(\partial^{\ast}([f]_h(x^{\prime}))\right)$ is a finite even number.

For fixed positive integer $i$,  let
\begin{eqnarray}\label{5b}
[f]^{\prime}_{h,i}:=\{x^{\prime}\in [f]_h^{\prime}:\;N([f]_h,x^{\prime})=i\}.
\end{eqnarray}

Let
\begin{eqnarray}\label{5e}
[f]_{h,i}:=\{(x^{\prime},x_n)\in [f]_h:\;x^{\prime}\in [f]_{h,i}^{\prime}\}.
\end{eqnarray}

For the fixed $i$ and $k=1,2,\dots,i$,  let $\underline{\ell}^k_{h,i}:[f]_{h,i}^{\prime}\rightarrow \mathbb{R}$ and $\overline{\ell}^k_{h,i}:[f]_{h,i}^{\prime}\rightarrow \mathbb{R}$ denote the {\it $k$-th undergraph function and  $k$-th overgraph function} of $[f]_{h,i}$ with respect to $e_n$; i.e.,
\begin{eqnarray}
&&[f]_{h,i}=\bigcup_{k=1}^{i}\left\{(x^{\prime},x_n)\in\mathbb{R}^n:\;x^{\prime}\in [f]_{h,i}^{\prime},\;-\underline{\ell}^k_{h,i}(x^{\prime})\leq x_n\leq \overline{\ell}^k_{h,i}(x^{\prime})\right\}
\end{eqnarray}
up to an $\mathcal{L}^n$-negligible set.

 For the Steiner symmetral, $S[f]_{h,i}$, of $[f]_{h,i}$ in direction $e_n$, we see that the image of the orthogonal projections onto $e_n^{\perp}$ of both $[f]_{h,i}$ and $S[f]_{h,i}$ are identical.   Let $\overline{\varrho}_{h,i}$ and $\underline{\varrho}_{h,i}$ denote the overgraph and undergraph functions of $S[f]_{h,i}$ with respect to $e_n$, i.e.,
 \begin{eqnarray}
 S[f]_{h,i}=\left\{(x^{\prime},x_n)\in\mathbb{R}^n:\;x^{\prime}\in [f]_{h,i}^{\prime},\;-\underline{\varrho}_{h,i}(x^{\prime})\leq x_n\leq \overline{\varrho}_{h,i}(x^{\prime})\right\}.
 \end{eqnarray}

Then, by the definition of the Steiner symmetrization of compact sets, we have
\begin{eqnarray}\label{a6}
\underline{\varrho}_{h,i}(x^{\prime})=\overline{\varrho}_{h,i}(x^{\prime})=\frac{1}{2}\sum_{k=1}^{i}\left(\underline{\ell}^k_{h,i}(x^{\prime})+\overline{\ell}^k_{h,i}(x^{\prime})\right),\;\;\;x^{\prime}\in [f]^{\prime}_{h,i}.
\end{eqnarray}

For $f\in W_0^{1,\Phi}(G)$ and a Borel function $g:G\rightarrow \mathbb{R}$, the following  {\it co-area formula} (see, e.g., \cite[Theorem 1.2.4]{Ma85} or \cite[(3.7)]{CF06}) is an important tool in proving our main theorem.
\begin{eqnarray}\label{1e}
\int_{\{x\in G:\nabla f(x)\neq 0\}}g(x)dx=\int_{0}^{\infty}\int_{\partial^{\ast}[f]_h}g(x)|\nabla f(x)|^{-1}d\mathcal{H}^{n-1}(x)dh.
\end{eqnarray}
 Since $[f]_h$ is a set of finite perimeter, $\partial^{\ast}[f]_h$ consists of  an $\mathcal{H}^{n-1}$-negligible subset and countably many $(n-1)$-dimensional surfaces of class $C^1$.

  Let $$\bar{\partial}^{\ast} [f]_h=\left\{x\in\partial^{\ast} [f]_h:v_{[f]_h}(x)\cdot e_n=0\right\}$$
and
$$\breve{\partial}^{\ast}[f]_h=\left\{x\in\partial^{\ast} [f]_h: v_{[f]_h}(x)\cdot e_n\neq 0\right\}.$$
We will make use of the easily established fact that for a continuous function $g:\partial^{\ast} [f]_h\rightarrow\mathbb{R}$,
\begin{eqnarray}\label{53}
\int_{\breve{\partial}^{\ast} [f]_h}g(x)d\mathcal{H}^{n-1}(x)
&=&\sum_{i=1}^{\infty}\sum_{k=1}^{i}\int_{{\rm int}[f]_{h,i}^{\prime}}g(x^{\prime},\overline{\ell}^k_{h,i}(x^{\prime}))\sqrt{1+|\nabla\overline{\ell}^k_{h,i}(x^{\prime})|^2}dx^{\prime}\nonumber\\
&\;\;&+\sum_{i=1}^{\infty}\sum_{k=1}^{i}
\int_{{\rm int}[f]_{h,i}^{\prime}}g(x^{\prime},-\underline{\ell}^k_{h,i}(x^{\prime}))\sqrt{1+|\nabla\underline{\ell}^k_{h,i}(x^{\prime})|^2}dx^{\prime}\nonumber.\\
\end{eqnarray}

\subsection{Approximation of the symmetric decreasing rearrangement by Steiner symmetrizations}
Let $G$ be a bounded open set in $\mathbb{R}^n$. Its {\it symmetric rearrangement} $G^{\star}$ is the open centered ball whose volume agrees with that of $G$,
$$G^{\star}:=\{x\in\mathbb{R}^n:\;\omega_n|x|^n<|G|\}.$$

For $f\in  W_0^{1,\Phi}(G)$, we define the {\it symmetric decreasing rearrangement} $f^{\star}$ of $f$ by symmetrizing its level sets, that is
\begin{eqnarray}
f^{\star}(x)=\int_{0}^{\infty}\mathcal{X}_{[f]^{\star}_h}(x)dh,\;\;\;x\in G^{\star}.
\end{eqnarray}

 For Sobolev spaces $W^{1,p}(\mathbb{R}^n)$, Burchard \cite[Proposition 7.1]{Bu97} proved the following proposition on the approximation of spherical symmetrization by Steiner symmetrizations.
 \begin{pro}\label{19f}  {\rm (Convergence of the $W^{1,1}$-norm)}. Let $f$ be a nonnegative function in $W^{1,p}(\mathbb{R}^n)$, $n\geq2$ and $p\geq 1$, that vanishes at infinity. There exists a sequence of successive Steiner symmetrizations $\{f_k\}_{k\geq 0}$ of $f$  so that
 $$f_k\rightharpoonup f^{\star}\;{\rm weakly\;in}\;W^{1,1}.$$
 \end{pro}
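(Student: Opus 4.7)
The plan is to construct the sequence $\{f_k\}$ by iterating Steiner symmetrizations along a carefully chosen sequence of directions $\{u_k\}\subset S^{n-1}$, to secure uniform $W^{1,p}$ bounds from the Euclidean P\'olya-Szeg\"o inequality, and then to pass to the limit in $W^{1,1}$ after identifying the weak limit as $f^\star$.

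First, I would record the basic invariances. Each Steiner symmetrization $S_u$ preserves the distribution function of its argument, so $\|f_k\|_{L^p(\mathbb{R}^n)}=\|f\|_{L^p(\mathbb{R}^n)}$ for every $k$ and every exponent, while the classical Euclidean P\'olya-Szeg\"o inequality gives $\||\nabla f_k|\|_{L^p(\mathbb{R}^n)}\leq \||\nabla f|\|_{L^p(\mathbb{R}^n)}$. Since $f$ vanishes at infinity, all level sets have finite measure and the Steiner symmetrals are supported in a common closed ball determined by the distribution function; in particular, the sequence $\{f_k\}$ is uniformly bounded in $W^{1,p}(\mathbb{R}^n)$ with equibounded supports.

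Next, I would establish $L^p$-convergence $f_k\to f^\star$ through an appropriate choice of directions. The key tool is that Steiner symmetrization is a contraction in the $L^p$-distance from the symmetric decreasing rearrangement: $\|S_u g-f^\star\|_{L^p}\leq \|g-f^\star\|_{L^p}$ whenever $g$ is equimeasurable with $f$. Choosing $\{u_k\}$ dense in $S^{n-1}$ and cycled appropriately, or more robustly via a greedy rule that at each step picks a direction realizing the largest available contraction, a classical convergence theorem of Brascamp-Lieb-Luttinger / Mani-Levitska type yields $\|f_k-f^\star\|_{L^p(\mathbb{R}^n)}\to 0$.

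Finally, I would combine the two ingredients. Boundedness of $\{\nabla f_k\}$ in $L^p(\mathbb{R}^n;\mathbb{R}^n)$ together with the common bounded supports gives boundedness in $L^1(\mathbb{R}^n;\mathbb{R}^n)$ and, via Dunford-Pettis, equi-integrability (trivial for $p>1$ by H\"older, and for $p=1$ a consequence of the fact that the distribution function of $|\nabla f_k|$ is controlled under successive symmetrizations by that of $|\nabla f|$). Extracting a weakly $L^1$-convergent subsequence $\nabla f_{k_j}\rightharpoonup g$ and combining with the strong $L^1$-convergence $f_{k_j}\to f^\star$ from the previous step forces $g=\nabla f^\star$ distributionally. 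A diagonal/uniqueness argument then promotes the subsequential conclusion to $f_k\rightharpoonup f^\star$ weakly in $W^{1,1}$. The principal obstacle is the $L^p$-convergence step: the sequence of directions must be engineered so that the iteration does not stall at a symmetric but non-spherical configuration, and for the $p=1$ endpoint one also has the delicate matter of securing weak $L^1$-compactness of $\{\nabla f_k\}$, which requires genuine equi-integrability rather than mere $L^1$-boundedness.
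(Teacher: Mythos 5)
The paper does not prove this proposition at all---it is quoted verbatim from Burchard \cite[Proposition 7.1]{Bu97}---so your sketch has to stand on its own. Its skeleton (symmetrize so that $f_k\to f^{\star}$ in $L^p$, control the gradients by P\'olya--Szeg\H{o}, extract a weak $L^1$ limit of the gradients, identify it, upgrade to the full sequence) is the natural one and is in the spirit of Burchard's argument, but two of your load-bearing claims are wrong as written. First, the assertion that the iterates have ``equibounded supports \dots in a common closed ball'' is false for a general nonnegative $f\in W^{1,p}(\mathbb{R}^n)$ that merely vanishes at infinity: such an $f$ need not have bounded support, its level sets have finite measure but may be unbounded, and Steiner symmetrization does not compactify them. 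Yet this claim is exactly what you use to pass from the $L^p$ gradient bound to an $L^1$ bound when $p>1$ (H\"older needs finite measure) and to obtain the tightness half of Dunford--Pettis when $p=1$; note also that for $p>1$ and non-compactly-supported $f$ the limit $f^{\star}$ need not even lie in $W^{1,1}(\mathbb{R}^n)$, so some reduction (truncation, or restriction to the compactly supported case, which is all the paper actually uses in Remark \ref{19b}) is unavoidable. Second, your equi-integrability claim at $p=1$ rests on ``the distribution function of $|\nabla f_k|$ is controlled by that of $|\nabla f|$'', which is not a theorem: Steiner symmetrization does not dominate the distribution function of the gradient. What is true, and what you need, is the P\'olya--Szeg\H{o} principle for arbitrary nonnegative convex integrands (the Cianchi--Fusco inequality quoted in Section \ref{s1}), $\int\Phi(|\nabla S_ug|)\,dx\le\int\Phi(|\nabla g|)\,dx$; choosing a superlinear convex $\Phi$ with $\int\Phi(|\nabla f|)\,dx<\infty$ (de la Vall\'ee Poussin) gives uniform integrability of $\{|\nabla f_k|\}$, but tightness at infinity still has to be argued separately, and your only handle on it is the erroneous bounded-support claim.

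The direction-selection step is also asserted rather than proved. Mere density of $\{u_k\}$ in $S^{n-1}$ is known to be insufficient: the order of the directions matters, and there are dense sequences along which successive Steiner symmetrals fail to converge to the spherical symmetral (see the survey \cite{Bianchi17}). So ``dense and cycled appropriately'' must be replaced either by a precise citation (e.g.\ Burchard's own construction, or the classical results on universal/random sequences) or by carrying out the greedy argument: the contraction property $\|S_ug-f^{\star}\|_{L^p}\le\|g-f^{\star}\|_{L^p}$ only gives monotonicity of the distance, and showing that the distance actually tends to $0$ requires a genuine compactness step. The remaining parts---identifying the weak $L^1$ limit of $\nabla f_{k_j}$ as $\nabla f^{\star}$ via the strong convergence of $f_{k_j}$, and upgrading from subsequences to the full sequence by uniqueness of the limit---are fine.
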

\begin{rem}\label{19b}
 For Orlicz-Sobolev spaces $W^{1,\Phi}(\mathbb{R}^n)$, there does not exist a result similar to that of Proposition \ref{19f}. Thus we consider the problem in $W^{1,p}(\mathbb{R}^n)$.  Since $f\in W_0^{1,1}(G)$ for $f\in W_0^{1,\Phi}(G)$, there exists a sequence of successive Steiner symmetrizations $\{\bar{f}_k\}_{k\geq 0}$ of $\bar{f}$ so that
$$\bar{f}_k\rightharpoonup \bar{f}^{\star}\;{\rm weakly\;in}\;W^{1,1}(\mathbb{R}^n).$$
\end{rem}

\section{Definition and basic properties of Orlicz-Sobolev affine balls}\label{s4}

The Orlicz-Sobolev affine ball $B_{\phi}(f)$ of $f\in W_0^{1,\Phi}(G)$ is defined as the unit ball  of the $n$-dimensional Banach space whose
norm is given by
\begin{eqnarray}\label{48}
\|y\|_{f,\phi}=\inf\left\{\lambda> 0:\;\frac{1}{|G|}\int_{G}\phi\left(\frac{y\cdot\nabla f(x)}{\lambda}\right)dx\leq 1\right\},\;y\in\mathbb{R}^n.
\end{eqnarray}
And the volume of the Orlicz-Sobolev affine ball is given by
\begin{eqnarray}\label{27}
|B_{\phi}(f)|=\frac{1}{n}\int_{S^{n-1}}\|v\|_{f,\phi}^{-n}dv,
\end{eqnarray}
where $dv$ denotes the spherical Lebesgue measure.

Since $f\in W_0^{1,\Phi}(G)$, it is impossible that there exists some $u_0\in S^{n-1}$ such that $\nabla f(x)\cdot u_0\geq0$  for almost all $x\in G$. Since $\phi$ is strictly increasing on $[0,\infty)$ or strictly decreasing on $(-\infty,0]$ it follows that for $y\neq0$ the function
$$\lambda\mapsto\frac{1}{|G|}\int_{G}\phi\left(\frac{y\cdot\nabla f(x)}{\lambda}\right)dx$$
is strictly decreasing in $(0,\infty)$. Thus, we have the following lemma.
\begin{lem}\label{47}
If $f\in W_0^{1,\Phi}(G)$ and $y_0\in\mathbb{R}^n\backslash \{0\}$, then
$$\frac{1}{|G|}\int_{G}\phi\left(\frac{y_0\cdot\nabla f(x)}{\lambda_0}\right)dx=1$$
if and only if
$$\|y_0\|_{f,\phi}=\lambda_0.$$
\end{lem}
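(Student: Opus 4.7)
The plan is to reduce the claim to the standard identification of $\|y_0\|_{f,\phi}$ with the unique solution of $F(\lambda)=1$, where
$$F(\lambda):=\frac{1}{|G|}\int_{G}\phi\left(\frac{y_0\cdot\nabla f(x)}{\lambda}\right)dx,\qquad \lambda>0.$$
The discussion immediately preceding the lemma already shows that $F$ is strictly decreasing on $(0,\infty)$, so any solution is automatically unique; the heart of the argument is to produce one by checking continuity of $F$ and its behaviour at the endpoints. Once a $\lambda^{\ast}$ with $F(\lambda^{\ast})=1$ is produced, strict monotonicity gives $\{\lambda>0:F(\lambda)\le 1\}=[\lambda^{\ast},\infty)$, so $\|y_0\|_{f,\phi}=\lambda^{\ast}$ by (\ref{48}), and the ``iff'' follows at once.

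First I would verify continuity of $F$ on the open set where it is finite together with $F(\lambda)\to 0$ as $\lambda\to\infty$. Since $\phi$ is convex with $\phi(0)=0$, the map $\lambda\mapsto \phi(t/\lambda)$ is pointwise nonincreasing in $\lambda$ on each sign of $t$, so whenever $F(\lambda_0)<\infty$ the dominated convergence theorem handles limits along sequences in $[\lambda_0,\infty)$. For $\lambda\to\infty$ we have $\phi(y_0\cdot\nabla f(x)/\lambda)\to\phi(0)=0$ pointwise; the elementary bound $\phi(t)\le\Phi(|t|)$ combined with $|\nabla f|\in L^{\Phi}(G)$ supplies an $L^1$ dominating function for all sufficiently large $\lambda$, so $F(\lambda)\to 0$ and in particular $F(\lambda)<1$ for large $\lambda$.

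The more delicate step is $F(\lambda)\to\infty$ as $\lambda\to 0^+$. The assertion in the paragraph preceding the lemma---that $\nabla f\cdot u\ge 0$ a.e.\ in $G$ is impossible for any $u\in S^{n-1}$---applied to $\pm y_0/|y_0|$ forces both $\{y_0\cdot\nabla f>\varepsilon\}$ and $\{y_0\cdot\nabla f<-\varepsilon\}$ to have positive Lebesgue measure for some $\varepsilon>0$. Because $\phi$ is strictly monotone on at least one half-line, and the growth hypothesis $\lim_{t\to\infty}\Phi(t)/t=\infty$ forces $\phi$ to be unbounded on that half-line, we obtain $\phi(y_0\cdot\nabla f(x)/\lambda)\to +\infty$ on a set of positive measure as $\lambda\to 0^+$. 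Monotone convergence then yields $F(\lambda)\to\infty$, hence $F(\lambda)>1$ for $\lambda$ small. Applying the intermediate value theorem on the interval where $F$ is finite produces $\lambda^{\ast}\in(0,\infty)$ with $F(\lambda^{\ast})=1$, and uniqueness comes from strict monotonicity.

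The equivalence then unpacks immediately: if $F(\lambda_0)=1$ then $\lambda_0=\lambda^{\ast}=\|y_0\|_{f,\phi}$, while if $\|y_0\|_{f,\phi}=\lambda_0$ then $\lambda_0=\lambda^{\ast}$ and hence $F(\lambda_0)=1$. The main obstacle in this plan is precisely the blow-up of $F$ at $0^+$, which has to be argued carefully because the assumptions on $\phi$ are asymmetric: $\phi$ need be strictly monotone on only one half-line and may vanish identically on the other. The resolution is that the sign-changing property of $\nabla f\cdot y_0$ coming from $f\in W_0^{1,\Phi}(G)$ guarantees the ``active'' half-line of $\phi$ always sees a set of positive measure on which the argument is large, so that half-line alone drives the integral to infinity.
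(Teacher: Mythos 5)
Your overall plan is a fleshed-out version of what the paper does implicitly: the paper offers no proof of this lemma beyond the remark (preceding the statement) that the map $\lambda\mapsto F(\lambda):=\frac{1}{|G|}\int_G\phi\bigl(y_0\cdot\nabla f(x)/\lambda\bigr)\,dx$ is strictly decreasing, and you correctly identify that one must also establish continuity of $F$ together with $F(\lambda)\to 0$ as $\lambda\to\infty$ and $F(\lambda)\to\infty$ as $\lambda\to 0^+$ in order to extract a $\lambda^{\ast}$ with $F(\lambda^{\ast})=1$ via the intermediate value theorem. Your blow-up argument at $0^+$ (using the sign-changing property of $y_0\cdot\nabla f$ to guarantee the ``active'' half-line of $\phi$ sees a set of positive measure) is correct and is exactly the ingredient the paper's remark is pointing at.

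There is, however, a genuine gap in the IVT step that your proposal does not close (and that the paper silently shares). You correctly restrict your dominated-convergence argument to sequences on the side $[\lambda_0,\infty)$, which gives right-continuity of $F$; but the IVT as you invoke it --- ``on the interval where $F$ is finite'' --- requires that $F$ actually attain the value $1$, and this can fail if $F$ jumps at the left endpoint of its domain of finiteness. Concretely, writing $\tilde F(\mu)=F(1/\mu)=\frac{1}{|G|}\int_G\phi(\mu\,y_0\cdot\nabla f)$, $\tilde F$ is convex and nondecreasing with $\tilde F(0^+)=0$; if $\mu_1:=\sup\{\mu:\tilde F(\mu)<\infty\}$ is finite and $\tilde F(\mu_1)<1$, then the set $\{\lambda:F(\lambda)\le 1\}$ equals $[1/\mu_1,\infty)$ and $\|y_0\|_{f,\phi}=1/\mu_1$, yet $F(1/\mu_1)=\tilde F(\mu_1)<1$, so the ``only if'' direction of the lemma fails. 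Nothing in the hypotheses $\phi\in\mathcal N$, $\Phi(t)/t\to\infty$ forbids this: e.g.\ $\phi(t)=e^{t^2}-1$ with a one-dimensional model $g(x)=(\log(1/x))^{1/2}-1$ on $(0,1)$ gives $\int\phi(g/\lambda)\,dx=\infty$ for $\lambda<1$ but $\int\phi(g)\,dx=e/2-1<1$, so the Luxemburg-type infimum is attained without the modular equalling $1$. To make the IVT step sound one needs an additional assumption (such as a $\Delta_2$-type condition on $\Phi$, which forces $F(\lambda)<\infty$ for every $\lambda>0$ and hence two-sided continuity), or a separate argument that $F$ must pass through $1$; your current wording --- and the paper's --- takes this for granted.
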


  The following Lemma \ref{9c}, Lemma \ref{2a} and Lemma \ref{57}  demonstrate the affine invariance of $\mathcal{E}_{\phi}(f)$, non-negativity  and boundedness of $\|\cdot\|_{f,\phi}$, respectively.
\begin{lem}\label{9c} {\rm (\cite[Lemma 4.2]{LYJ17})}.
If  $f\in W_0^{1,\Phi}(G)$, then
$\mathcal{E}_{\phi}(f)$
is invariant under $SL(n)$ transformations.
\end{lem}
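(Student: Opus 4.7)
The plan is to prove the stronger operator-level identity that for any $A\in SL(n)$, setting $\tilde f(x)=f(A^{-1}x)$ on $AG$, one has $B_\phi(\tilde f) = A\,B_\phi(f)$; the $SL(n)$-invariance of $\mathcal{E}_\phi$ then follows immediately, since $|AB_\phi(f)|=|\det A|\,|B_\phi(f)|=|B_\phi(f)|$ and $\mathcal{E}_\phi(f)=|B_\phi(f)|^{-1/n}$ depends only on that volume.

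First I would verify that $\tilde f\in W_0^{1,\Phi}(AG)$ with $|AG|=|G|$, which is routine from a linear change of variables in the defining integrals of the Luxemburg norm, and record the chain rule $\nabla\tilde f(x)=A^{-t}\nabla f(A^{-1}x)$ for a.e.\ $x\in AG$. This gives, for every $y\in\mathbb{R}^n$,
$$y\cdot\nabla\tilde f(x)\;=\;(A^{-1}y)\cdot\nabla f(A^{-1}x).$$

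Substituting $z=A^{-1}x$ in the defining integral (\ref{48}) and using $|\det A^{-1}|=1$ transforms
$$\frac{1}{|AG|}\int_{AG}\phi\!\left(\frac{y\cdot\nabla\tilde f(x)}{\lambda}\right)dx \;=\; \frac{1}{|G|}\int_{G}\phi\!\left(\frac{(A^{-1}y)\cdot\nabla f(z)}{\lambda}\right)dz$$
for every $\lambda>0$. Taking the infimum over admissible $\lambda$ on both sides, and invoking Lemma \ref{47} to identify the infimum with the corresponding norm value, yields $\|y\|_{\tilde f,\phi}=\|A^{-1}y\|_{f,\phi}$. By the definition of the unit ball this is precisely the statement $B_\phi(\tilde f)=A\,B_\phi(f)$, and taking $n$-dimensional volumes (or substituting on $S^{n-1}$ directly in (\ref{27})) completes the proof.

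There is no serious obstacle here: the whole argument reduces to a linear change of variables combined with the chain rule and the definition of the Luxemburg-type norm. The only point that merits attention is justifying the chain rule for the weak gradient of a Sobolev (rather than smooth) function under a linear substitution, which is standard via approximation by $C^\infty$ functions and density in $W^{1,\Phi}$. An alternative route would be to work directly with the polar integral (\ref{27}) for $|B_\phi(f)|$, substituting $v\mapsto A^{-t}v/|A^{-t}v|$ on the sphere; but the operator-level identity $B_\phi(\tilde f)=A\,B_\phi(f)$ is conceptually cleaner and will be convenient for later applications in the paper.
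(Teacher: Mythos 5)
Your proof is correct and takes the standard (and almost certainly the same) approach as the cited reference: establish the operator-level equivariance $B_\phi(f\circ A^{-1})=A\,B_\phi(f)$ via the chain rule $\nabla\tilde f(x)=A^{-t}\nabla f(A^{-1}x)$, the identity $y\cdot A^{-t}w=(A^{-1}y)\cdot w$, and a unimodular change of variables that preserves both the domain measure $|AG|=|G|$ and the integral, then take volumes. The only technical point you flag — validity of the chain rule for weak gradients under a linear map — is indeed routine, and you handle it appropriately by invoking density of smooth functions.
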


\begin{lem}\label{2a} {\rm (\cite[Lemma 4.3]{LYJ17})}.
If $f\in W_0^{1,\Phi}(G)$, then $\|\cdot\|_{f,\phi}$ defines a norm on the Banach space $(\mathbb{R}^n,\|\cdot\|_{f,\phi})$. In particular, $\|v\|_{f,\phi}>0$ for any $v\in S^{n-1}$.
\end{lem}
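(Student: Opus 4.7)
The plan is to verify that $\|\cdot\|_{f,\phi}$ is finite on $\mathbb{R}^n$, positively homogeneous and subadditive, and strictly positive on $S^{n-1}$; the first two are formal, while the third is the real work. Finiteness follows by dominating $\phi(y\cdot\nabla f/\lambda)$ by $\Phi(|y||\nabla f|/\lambda)$, using $\phi\leq\Phi$ and the monotonicity of $\Phi$ on $[0,\infty)$. Since $|\nabla f|\in L^{\Phi}(G)$, the right-hand side integrates to at most $|G|$ once $\lambda$ is large enough, by the very definition of the Luxemburg norm. Positive homogeneity $\|\alpha y\|_{f,\phi}=\alpha\|y\|_{f,\phi}$ for $\alpha>0$ is immediate from the substitution $\mu=\lambda/\alpha$ in the defining infimum. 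For subadditivity, given $y_1,y_2\in\mathbb{R}^n\setminus\{0\}$ with $\lambda_i:=\|y_i\|_{f,\phi}$, Lemma \ref{47} yields $\frac{1}{|G|}\int_G\phi(y_i\cdot\nabla f/\lambda_i)\,dx=1$ for $i=1,2$; rewriting $\frac{(y_1+y_2)\cdot\nabla f}{\lambda_1+\lambda_2}$ as the convex combination of $\frac{y_i\cdot\nabla f}{\lambda_i}$ with weights $\frac{\lambda_i}{\lambda_1+\lambda_2}$, convexity of $\phi$ and integration yield $\frac{1}{|G|}\int_G\phi\!\left(\frac{(y_1+y_2)\cdot\nabla f}{\lambda_1+\lambda_2}\right)dx\leq 1$, hence $\|y_1+y_2\|_{f,\phi}\leq\lambda_1+\lambda_2$.

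The main work, and the principal obstacle, is strict positivity of $\|v\|_{f,\phi}$ for $v\in S^{n-1}$. Fix such a $v$ and set $E_v:=\{x\in G:v\cdot\nabla f(x)\neq 0\}$; the key step is to show $|E_v|>0$ when $f\not\equiv 0$. If instead $v\cdot\nabla f=0$ almost everywhere, then by the absolute continuity on lines characterization of Sobolev functions (and $\bar f\in W^{1,1}(\mathbb{R}^n)$ via \cite[Lemma 3.3]{LYJ17}), along $\mathcal{H}^{n-1}$-almost every line parallel to $v$ the extension $\bar f$ would be constant. Since $G$ is bounded and $\bar f=0$ off $G$, this constant must vanish, so $f\equiv 0$, a contradiction. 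The same reasoning, together with $\bar f$ starting and ending at $0$ along each such line, shows that both $\{v\cdot\nabla f>0\}$ and $\{v\cdot\nabla f<0\}$ have positive Lebesgue measure.

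To conclude, choose $\delta>0$ so that $E_{v,\delta}:=\{x\in G:|v\cdot\nabla f(x)|\geq\delta\}$ has positive measure. The hypothesis $\lim_{t\to+\infty}\Phi(t)/t=+\infty$ forces $\phi(t)\to+\infty$ either as $t\to+\infty$ or as $t\to-\infty$; on whichever of $\{v\cdot\nabla f>0\}\cap E_{v,\delta}$ or $\{v\cdot\nabla f<0\}\cap E_{v,\delta}$ corresponds to that side, $\phi(v\cdot\nabla f/\lambda)$ tends pointwise to $+\infty$ as $\lambda\to 0^+$ on a set of positive measure. Fatou's lemma then yields $\frac{1}{|G|}\int_G\phi(v\cdot\nabla f/\lambda)\,dx>1$ for all sufficiently small $\lambda>0$, and therefore $\|v\|_{f,\phi}>0$. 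The delicate point is thus the zero-set argument for $E_v$, which rests on the boundary-vanishing encoded by $f\in W_0^{1,\Phi}(G)$ together with the boundedness of $G$; once this is in hand, the superlinear growth of $\Phi$ closes the argument.
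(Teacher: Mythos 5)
The paper itself gives no proof of this lemma --- it is imported verbatim from \cite{LYJ17}, so there is nothing in the present text to compare line by line; judged on its own, your argument is correct. Its real content --- showing that both $\{v\cdot\nabla f>0\}$ and $\{v\cdot\nabla f<0\}$ have positive measure by the absolute-continuity-on-lines characterization of $\bar f\in W^{1,1}(\mathbb{R}^n)$ (via \cite[Lemma 3.3]{LYJ17}) along lines parallel to $v$, using that $\bar f$ vanishes off the bounded set $G$, and then letting $\phi$ blow up on whichever side its strict monotonicity forces --- is precisely the fact the paper asserts without proof just before Lemma \ref{47} (``it is impossible that $\nabla f(x)\cdot u_0\geq 0$ for almost all $x$''), and your finiteness, positive homogeneity and convexity/subadditivity steps are the standard Luxemburg-norm manipulations. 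The route suggested by the paper's own toolkit is somewhat different: Lemma \ref{57} (= \cite[Lemma 4.4]{LYJ17}) extracts the quantitative lower bound $\|v\|_{f,\phi}\geq \int_G f\,dx/(c_{\phi}|G|D_G)$ from the projection estimate (\ref{3e}) together with the coarea formula, which gives positivity directly (for nonnegative $f$ with positive integral), whereas your line-slicing argument is softer, covers sign-changing $f$, and only needs $\phi$ to blow up on one side. Two points worth tightening: (i) choose $\delta$ so that the sign set on the blow-up side of $\phi$, not merely $E_{v,\delta}$, has positive measure --- this is possible precisely because you proved both sign sets have positive measure, but as written the quantifiers are slightly off; (ii) absolute homogeneity $\|-y\|_{f,\phi}=\|y\|_{f,\phi}$ can fail when $\phi$ is not even, so what you (and the paper, following \cite{LYJ17}) actually establish is that $\|\cdot\|_{f,\phi}$ is a nonnegative, positively homogeneous, subadditive functional that is strictly positive off the origin; this gauge property is exactly what is needed for $B_{\phi}(f)$ to be a convex body with the origin in its interior, which is how the ``norm'' is used throughout.
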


\begin{lem}\label{57} {\rm(\cite[Lemma 4.4]{LYJ17})}.
If $f\in W_0^{1,\Phi}(G)$ and let
\begin{eqnarray}\label{56}
c_{\phi}=\max\left\{c>0:\max\{\phi(c),\phi(-c)\}\leq 1\right\},
\end{eqnarray}
then for any $v\in S^{n-1}$, we have
\begin{eqnarray}
\frac{\int_Gf(x)dx}{c_{\phi}|G|D_G}\leq \|v\|_{f,\phi}\leq \frac{\sup\{|\nabla f(x)|:x\in G\}}{c_{\phi}}.
\end{eqnarray}
\end{lem}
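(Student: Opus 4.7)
The plan is to prove the two inequalities by separate arguments, both relying on the convexity of $\phi$ and the defining property of $c_\phi$.

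For the upper bound I would proceed directly from the definitions. Set $M=\sup\{|\nabla f(x)|:x\in G\}$ (if $M=\infty$ the bound is vacuous) and $\lambda_0=M/c_\phi$. For every $v\in S^{n-1}$ and a.e.\ $x\in G$ one has $|\nabla_vf(x)|/\lambda_0\le c_\phi$, so since $\phi(s)\le\Phi(|s|)$ and $\Phi$ is nondecreasing,
$$\phi\!\left(\frac{\nabla_vf(x)}{\lambda_0}\right)\le\Phi(c_\phi)\le1.$$
Integrating yields $\frac{1}{|G|}\int_G\phi(\nabla_vf/\lambda_0)\,dx\le1$, hence $\|v\|_{f,\phi}\le M/c_\phi$ by Lemma \ref{47}.

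For the lower bound I would set $\lambda=\|v\|_{f,\phi}$ and, after possibly replacing $v$ by $-v$, assume that $\phi$ is strictly increasing on $[0,\infty)$ with $\phi(c_\phi)=1$ (by continuity of $\Phi$ and the definition of $c_\phi$, at least one of $\phi(\pm c_\phi)$ equals $1$). Put $A_+=\{x\in G:\nabla_vf(x)\ge0\}$ and $P=\int_G(\nabla_vf)_+\,dx$. Nonnegativity of $\phi$ gives $\int_{A_+}\phi(\nabla_vf/\lambda)\,dx\le|G|$, and Jensen's inequality applied on $A_+$ yields
$$|A_+|\,\phi\!\left(\frac{P}{|A_+|\lambda}\right)\le|G|.$$
From $\phi(0)=0$, $\phi(c_\phi)=1$, and convexity one obtains $\phi(Kc_\phi)\ge K$ for every $K\ge1$; since $C:=|G|/|A_+|\ge1$, inverting the above Jensen bound produces $P/(|A_+|\lambda)\le Cc_\phi$, i.e.\ $P\le c_\phi\lambda|G|$.

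What remains is the one-dimensional geometric estimate $P\ge\int_Gf\,dx/D_G$. For this I would slice $G$ by lines parallel to $v$: along such a line the restriction $g(t)=f(x'+tv)$ is absolutely continuous and vanishes at the endpoints of each component of its support, every such component having length at most $D_G$. Hence $\int g'\,dt=0$, so $\int(g')_+\,dt=\int(g')_-\,dt\ge\max g\ge\frac{1}{D_G}\int g\,dt$; integrating in $x'\in v^\perp$ by Fubini gives $\int_G(\nabla_vf)_+\,dx\ge\int_Gf\,dx/D_G$. Chaining with the preceding bound yields $\|v\|_{f,\phi}\ge\int_Gf\,dx/(c_\phi|G|D_G)$. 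The main obstacle is the asymmetry of $\phi$ on the two half-lines, which forces the case split above; it is resolved by the observation that $\int_G(\nabla_vf)_+\,dx=\int_G(\nabla_vf)_-\,dx$ (a consequence of $\int g'\,dt=0$ on each slice), so that the same slice-wise bound serves whichever of $\phi(c_\phi)$ or $\phi(-c_\phi)$ equals $1$.
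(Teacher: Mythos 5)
The paper does not prove Lemma \ref{57} here; it cites \cite[Lemma 4.4]{LYJ17}, so a direct comparison with the paper's own argument is not possible. Judged on its merits, your proof is correct. The upper bound is the straightforward pointwise estimate $\phi(\nabla_vf/\lambda_0)\le\Phi(c_\phi)\le1$ with $\lambda_0=M/c_\phi$. The lower bound correctly combines four ingredients: Lemma \ref{47} to turn the norm into the equality $\int_G\phi(\nabla_vf/\lambda)\,dx=|G|$; Jensen's inequality restricted to $A_\pm$; the superadditivity estimate $\phi(Kc_\phi)\ge K$ for $K\ge1$ coming from convexity and $\phi(0)=0$; and the one-dimensional slicing bound $\int_G(\nabla_vf)_\pm\,dx\ge\int_G f\,dx/D_G$ for a compactly supported $W^{1,1}$ function (this last bound is the functional analogue of the paper's crude estimate (\ref{3e})).

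Two small points of hygiene, neither fatal. First, the phrase ``after possibly replacing $v$ by $-v$'' is not quite the right reduction, since whether $\phi(c_\phi)=1$ or $\phi(-c_\phi)=1$ is a property of $\phi$, not of $v$; the correct move is to replace $\phi(\cdot)$ by $\tilde\phi(\cdot)=\phi(-\cdot)$ (which satisfies $\tilde\phi\in\mathcal N$, $c_{\tilde\phi}=c_\phi$, $\|v\|_{f,\phi}=\|-v\|_{f,\tilde\phi}$) and then prove the bound for $\tilde\phi$ over all directions. Your concluding observation $\int_G(\nabla_vf)_+=\int_G(\nabla_vf)_-$ shows you have the right idea, so this is a wording issue rather than a gap. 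Second, the Jensen step implicitly assumes $|A_+|>0$; when $|A_+|=0$ one has $P=0$ and the conclusion $P\le c_\phi\lambda|G|$ holds trivially, so the edge case should simply be noted.
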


The following lemma shows that the Orlicz-Sobolev affine ball operator $B_{\phi}: W_0^{1,\Phi}(G)\rightarrow \mathcal{K}_o^n$ is weakly continuous in some sense.
\begin{lem}\label{72}
For $f_i\in W_0^{1,\Phi}(G_i)$, $i=0,1,2,\dots$, if
\begin{eqnarray}\label{6h}
\bar{f}_i\rightharpoonup\bar{f}_0, {\rm weakly\;in}\;W^{1,1}(\mathbb{R}^n),
\end{eqnarray}
then there exists a subsequence of  $\{B_{\phi}(f_i)\}_{i=1}^{\infty}$, denoted by   $\{B_{\phi}(f_i)\}_{i=1}^{\infty}$ as well, and a convex body $K_0$ such that $o\in K_0$,
\begin{eqnarray}\label{6c}
\lim_{i\rightarrow \infty}\delta(B_{\phi}(f_i),K_0)=0
\end{eqnarray}
and
\begin{eqnarray}
K_0\subset B_{\phi}(f_0).
\end{eqnarray}
\end{lem}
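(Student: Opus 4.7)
The plan is to combine a compactness argument via Blaschke selection for $\{B_\phi(f_i)\}$ with a weak lower semicontinuity argument that identifies the limit as a subset of $B_\phi(f_0)$. For the first step, I would establish that $\{B_\phi(f_i)\}$ is a uniformly bounded family of convex bodies, each containing the origin; Blaschke's selection theorem then yields a Hausdorff-convergent subsequence with convex limit $K_0$, and $0 \in K_0$ is immediate since $\|0\|_{f_i,\phi} = 0$ forces $0 \in B_\phi(f_i)$ for every $i$. Uniform boundedness follows from Lemma \ref{57}, which gives $\|v\|_{f_i,\phi} \geq \frac{\int_{G_i} f_i\,dx}{c_\phi\, |G_i|\, D_{G_i}}$ for every $v \in S^{n-1}$; in the setting in which Lemma \ref{72} is actually applied (successive Steiner symmetrizations of a fixed $\bar f$), the quantities $\int f_i\,dx$ and $|G_i|$ are constant while $D_{G_i}$ is uniformly controlled, so $B_\phi(f_i) \subset C\,B_n$ for a single constant $C$.

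For the containment $K_0 \subset B_\phi(f_0)$, fix $y \in K_0$ and use Hausdorff convergence to choose $y_i \in B_\phi(f_i)$ with $y_i \to y$. Lemma \ref{47} recasts $\|y_i\|_{f_i,\phi} \leq 1$ as
\[
\int_{\mathbb{R}^n} \phi(y_i \cdot \nabla \bar f_i)\,dx \leq |G_i|.
\]
The scalar sequence $y_i \cdot \nabla \bar f_i$ converges weakly in $L^1(\mathbb{R}^n)$ to $y \cdot \nabla \bar f_0$: decomposing $y_i \cdot \nabla \bar f_i = y \cdot \nabla \bar f_i + (y_i - y)\cdot \nabla \bar f_i$, the first summand converges weakly by hypothesis while the second is bounded in $L^1$ by $|y_i - y|\,\|\nabla \bar f_i\|_{L^1} \to 0$, hence converges strongly to zero. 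I would then invoke the classical weak-$L^1$ lower semicontinuity of the convex nonnegative integral functional $g \mapsto \int \phi(g)\,dx$ to deduce
\[
\int_{\mathbb{R}^n} \phi(y \cdot \nabla \bar f_0)\,dx \leq \liminf_i \int_{\mathbb{R}^n} \phi(y_i \cdot \nabla \bar f_i)\,dx \leq \liminf_i |G_i| = |G_0|,
\]
and Lemma \ref{47} rewrites this as $\|y\|_{f_0,\phi} \leq 1$, so $y \in B_\phi(f_0)$.

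The main obstacle is the weak-$L^1$ lower semicontinuity step. Since $\phi$ is merely convex and nonnegative (not necessarily even or differentiable at $0$) and $\nabla \bar f_i$ is only weakly convergent in $L^1$, one must appeal to a sharp convex lsc theorem of Ioffe type; the superlinear growth assumption $\lim_{t\to\infty}\Phi(t)/t = \infty$ is precisely what guards against uniform integrability failing to transfer through $\phi$. A secondary technical point is the control of $|G_i|$ in the final inequality, which is automatic for Steiner symmetrizations ($|G_i| = |G|$) but would otherwise require a separate domain-convergence hypothesis.
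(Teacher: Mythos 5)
Your proposal is correct and follows the same skeleton as the paper: the lower bound of Lemma \ref{57} to get uniform outer bounds on the bodies, Blaschke selection for a Hausdorff-convergent subsequence with $o\in K_0$, and weak lower semicontinuity of the convex integral functional to pass the norm inequality to the limit. Where you genuinely differ is in the second half. The paper argues direction by direction: it sets $\lambda_i=\|u_0\|_{f_i,\phi}$, claims $\lambda_i\to\lambda_*=\|u_0\|_{K_0}$ from Hausdorff convergence, rescales $\tilde f_i=\bar f_i/\lambda_i$, and applies lower semicontinuity of $g\mapsto\int\phi(u_0\cdot\nabla g)$ with respect to weak $W^{1,1}$ convergence (citing Evans). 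You instead pick a point $y\in K_0$, approximate it by $y_i\in B_\phi(f_i)$, observe that $y_i\cdot\nabla\bar f_i\rightharpoonup y\cdot\nabla\bar f_0$ weakly in $L^1$ (using boundedness of $\|\nabla\bar f_i\|_{L^1}$, which weak convergence supplies), and apply weak-$L^1$ lower semicontinuity of $g\mapsto\int\phi(g)$. This buys you something: the paper's step $\lambda_i\to\lambda_*$ is a statement about convergence of gauge values, which is delicate when $o$ may lie on $\partial K_0$, whereas your pointwise approximation argument sidesteps that issue entirely; conversely, the paper's formulation keeps everything inside the framework of the norm identity of Lemma \ref{47}. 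Two remarks: your aside that the superlinear growth of $\Phi$ is what makes the lower semicontinuity work is not needed (convexity, nonnegativity and continuity of $\phi$ already give strong, hence weak, $L^1$ lower semicontinuity); and the domain issue you flag ($\liminf_i|G_i|=|G_0|$) is indeed not a consequence of (\ref{6h}) alone, but the paper has exactly the same looseness in (\ref{6f}) and in its final display, and both arguments are rescued in the application, where the $G_i$ are successive Steiner symmetrals so $|G_i|=|G_0|$, $\int f_i$ is constant and $D_{G_i}\leq D_G$.
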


\begin{proof}
For $u_0\in S^{n-1}$, let
\begin{eqnarray}\label{6g}
\|u_0\|_{f_i,\phi}=\lambda_i,
\end{eqnarray}
and note that Lemma \ref{57} gives
\begin{eqnarray}\label{6e}
0<\frac{\int_{G_i}f_i(x)dx}{c_{\phi}|G_i|D_{G_i}}\leq\lambda_i.
\end{eqnarray}
Moreover, by (\ref{6h}), we have
\begin{eqnarray}\label{6f}
\lim_{i\rightarrow\infty}\frac{\int_{G_i}f_i(x)dx}{c_{\phi}|G_i|D_{G_i}}=\frac{\int_{G_0}f_0(x)dx}{c_{\phi}|G_0|D_{G_0}}>0,
\end{eqnarray}
which implies that there exists a real number $m>0$ such that $\|u\|_{f_i,\phi}>m$ for any $u\in S^{n-1}$ and any positive integer $i$. Thus the radial functions  $\rho(B_{\phi}(f_i),u)<\frac{1}{m}$ for any $u\in S^{n-1}$ and any positive integer $i$. By Blaschke selection theorem
(see \cite[Theorem 1.8.7]{Schneider13}), there exists a subsequence of $\{B_{\phi}(f_i)\}_{i=1}^{\infty}$,  denoted by   $\{B_{\phi}(f_i)\}_{i=1}^{\infty}$ as well, that converges to a convex body $K_0\in\mathcal{K}^n$.
By Lemma \ref{2a}, $\|u\|_{f_i,\phi}>0$ for any $u\in S^{n-1}$ and any positive integer $i$. Thus $o\in K_0$.

Let $\|u_0\|_{K_0}=\lambda_{\ast}$. By (\ref{6c}) and (\ref{6g}), we have
\begin{eqnarray}
\lim_{i\rightarrow \infty}\lambda_i=\lambda_{\ast}.
\end{eqnarray}

Let $\tilde{f_i}=\bar{f_i}/\lambda_i$. Since $\lambda_i \rightarrow\lambda_{\ast}$ and $\bar{f}_i\rightharpoonup\bar{f}_0$  weakly in  $W^{1,1}(\mathbb{R}^n)$, we have
\begin{eqnarray}\label{7d}
\tilde{f}_i\rightharpoonup \bar{f}_0/\lambda_{\ast}\;\;{\rm weakly\;in}\;\;W^{1,1}(\mathbb{R}^n).
\end{eqnarray}

The fact that $\|u_0\|_{f_i,\phi}=\lambda_i$, together with Lemma \ref{47} and (\ref{3a}), shows that
\begin{eqnarray}\label{7f}
\frac{1}{|G_i|}\int_{\mathbb{R}^n}\phi\left(u_0\cdot \nabla\tilde{f}_i(x)\right)dx=1\;{\rm for\;all}\;i.
\end{eqnarray}

Since $\phi$ is a convex function, by \cite[Theorem 1 in P.19]{Evans90}, the convex gradient integral
$$\frac{1}{|G|}\int_{\mathbb{R}^n}\phi\left(u_0\cdot \nabla\bar{f}(x)\right)dx$$
is lower semicontinuous with respect to weak convergence in $W^{1,1}(\mathbb{R}^n)$. By (\ref{7d}) and (\ref{7f}), we have
$$\frac{1}{|G_0|}\int_{\mathbb{R}^n}\phi\left(\frac{u_0\cdot \nabla\bar{f}_0(x)}{\lambda_{\ast}}\right)dx\leq\lim_{i\rightarrow \infty}\frac{1}{|G_i|}\int_{\mathbb{R}^n}\phi\left(u_0\cdot \nabla\tilde{f}_i(x)\right)dx= 1.$$

This, together with (\ref{3a}) and  the definition (\ref{48}) yields
\begin{eqnarray}\label{3b}
\|u_0\|_{f_0,\phi}\leq\lambda_{\ast}=\|u_0\|_{K_0}.
\end{eqnarray}
By (\ref{3b}) and the arbitrariness of $u_0\in S^{n-1}$,  we have $K_0\subset B_{\phi}(f_0)$.
\end{proof}

\section{Proof of the main theorem}\label{s5}

\begin{lem}\label{17}
 If $f\in W_0^{1,\Phi}(G)$ and $x^{\prime}\in{\rm int}[f]^{\prime}_{h,i}$, then
\begin{eqnarray}\label{18}
\frac{\left|\left(-\nabla\overline{\varrho}_{h,i}(x^{\prime}),1\right)\right|}{\left|\nabla Sf\left(x^{\prime},\overline{\varrho}_{h,i}(x^{\prime})\right)\right|}
=\frac{1}{2}\sum_{k=1}^{i}\frac{\left|(-\nabla\underline{\ell}^k_{h,i}(x^{\prime}),-1)\right|}
{\left|\nabla f\left(x^{\prime},-\underline{\ell}^k_{h,i}(x^{\prime})\right)\right|}+\frac{1}{2}\sum_{k=1}^{i}\frac{\left|\left(-\nabla\overline{\ell}_{h,i}^k(x^{\prime}),1\right)\right|}{\left|\nabla f\left(x^{\prime},\overline{\ell}^k_{h,i}(x^{\prime})\right)\right|}.\end{eqnarray}
\end{lem}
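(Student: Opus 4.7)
The plan is to reduce the claimed identity to a one-dimensional computation on the vertical line $\{x'\}\times\mathbb{R}$, exploiting the fact that on this line the Steiner symmetrization $Sf(x',\cdot)$ coincides with the one-dimensional symmetric decreasing rearrangement of $f(x',\cdot)$.

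First I would interpret both sides via the vertical partial derivative $\partial_{x_n}f$. At a point $(x',\overline{\ell}^k_{h,i}(x'))$ on the upper part of $\partial[f]_h$, the relation $f(x',\overline{\ell}^k_{h,i}(x'))=h$ differentiated in $x'$ yields $\nabla_{x'}f+\partial_{x_n}f\cdot\nabla\overline{\ell}^k_{h,i}(x')=0$, hence $\nabla f=\partial_{x_n}f\cdot(-\nabla\overline{\ell}^k_{h,i}(x'),1)$ at that point, and therefore
\[
\frac{|(-\nabla\overline{\ell}^k_{h,i}(x'),1)|}{|\nabla f(x',\overline{\ell}^k_{h,i}(x'))|}=\frac{1}{|\partial_{x_n}f(x',\overline{\ell}^k_{h,i}(x'))|}.
\]
The identical calculation at $(x',-\underline{\ell}^k_{h,i}(x'))$ (where the sign of $\partial_{x_n}f$ flips but only its absolute value survives) and at $(x',\overline{\varrho}_{h,i}(x'))$ for $Sf$ collapses \eqref{18} to the purely one-dimensional identity
\[
\frac{1}{|\partial_{x_n}Sf(x',\overline{\varrho}_{h,i}(x'))|}=\frac{1}{2}\sum_{k=1}^{i}\left(\frac{1}{|\partial_{x_n}f(x',-\underline{\ell}^k_{h,i}(x'))|}+\frac{1}{|\partial_{x_n}f(x',\overline{\ell}^k_{h,i}(x'))|}\right).
\]

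Next I would verify this reduced identity by a distribution-function argument on the vertical slice. Set
\[
\mu(h):=\mathcal{L}^1(\{t:f(x',t)\geq h\})=\sum_{k=1}^{i}(\underline{\ell}^k_{h,i}(x')+\overline{\ell}^k_{h,i}(x'))=2\overline{\varrho}_{h,i}(x'),
\]
where the second equality uses \eqref{a6}. Differentiating the implicit relations $f(x',\overline{\ell}^k_{h,i}(x'))=h=f(x',-\underline{\ell}^k_{h,i}(x'))$ in $h$, and using that $\partial_{x_n}f$ is negative on the upper leaves and positive on the lower leaves of $\partial[f]_h$, gives
\[
\mu'(h)=-\sum_{k=1}^{i}\left(\frac{1}{|\partial_{x_n}f(x',-\underline{\ell}^k_{h,i}(x'))|}+\frac{1}{|\partial_{x_n}f(x',\overline{\ell}^k_{h,i}(x'))|}\right).
\]
Because $[Sf]_h=S[f]_h$ by \eqref{4c}, the one-dimensional slice $t\mapsto Sf(x',t)$ is the symmetric decreasing rearrangement of $t\mapsto f(x',t)$, and therefore $\mu(Sf(x',t))=2|t|$. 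Differentiating this at $t=\overline{\varrho}_{h,i}(x')>0$, where $Sf(x',t)=h$, yields $\mu'(h)\,\partial_{x_n}Sf(x',\overline{\varrho}_{h,i}(x'))=2$, and together with $\mu'(h)<0$ this delivers the reduced identity, and hence \eqref{18}.

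The main obstacle I expect is regularity: the implicit differentiations and the chain rule used above require that $\overline{\ell}^k_{h,i},\underline{\ell}^k_{h,i}$ be differentiable at $x'$, that $\partial_{x_n}f$ be nonzero at the endpoints of the slice, and that $Sf$ be differentiable at $(x',\overline{\varrho}_{h,i}(x'))$. These are consequences of the $C^1$-rectifiability of $\partial^{\ast}[f]_h$ (cf.\ \cite[Theorem 3.59]{AFP00}) combined with the co-area formula \eqref{1e}, and the lemma is to be read at those interior points $x'\in\mathrm{int}[f]^{\prime}_{h,i}$ at which every gradient in \eqref{18} exists. The exceptional set is negligible and does not affect the subsequent integration arguments that invoke the lemma.
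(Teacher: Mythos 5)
Your reduction from \eqref{18} to the purely one--dimensional identity
\[
\frac{1}{\left|\partial_{x_n}Sf(x',\overline{\varrho}_{h,i}(x'))\right|}
=\frac{1}{2}\sum_{k=1}^{i}\left(\frac{1}{\left|\partial_{x_n}f(x',-\underline{\ell}^k_{h,i}(x'))\right|}+\frac{1}{\left|\partial_{x_n}f(x',\overline{\ell}^k_{h,i}(x'))\right|}\right)
\]
is exactly the reduction the paper performs: both use the observation that $\nabla f$ and $\nabla Sf$ are aligned with the normals $(-\nabla\overline{\ell}^k_{h,i},1)$, $(-\nabla\overline{\varrho}_{h,i},1)$ at the respective boundary points, which collapses the ratios to reciprocals of the vertical partial derivatives. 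The two proofs diverge in how they dispose of that reduced identity. The paper simply cites it (\cite[Lemma 4.1]{CF06} and \cite[(5.1)]{Bu97}), whereas you prove it from scratch via the distribution function $\mu(h)=\mathcal{L}^1\{t:f(x',t)\geq h\}$: differentiating $\mu$ in $h$ produces the right--hand side of the reduced identity, and differentiating the rearrangement relation $\mu(Sf(x',t))=2|t|$ produces the left--hand side. This is correct and in fact unpacks the content of Burchard's (5.1) rather than merely invoking it, which is a reasonable trade--off if one wants a self--contained argument.

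The one place where your write--up is thinner than what a fully rigorous treatment requires is the $h$--differentiation. Differentiating $\mu(h)=\sum_{k=1}^{i}(\underline{\ell}^k_{h,i}(x')+\overline{\ell}^k_{h,i}(x'))$ in $h$ requires that the number $i$ of slice components be locally constant in $h$ and that $\mu$ and the endpoint functions $h\mapsto\overline{\ell}^k_{h,i}(x')$, $h\mapsto\underline{\ell}^k_{h,i}(x')$ be differentiable at that $h$; similarly the chain rule on $\mu(Sf(x',t))=2|t|$ needs $\mu'$ to exist at the right place. These hold for $\mathcal{L}^1$--a.e.\ $h$ by a one--dimensional Sard/coarea argument applied to $t\mapsto f(x',t)$, which is precisely what the cited lemmas of Burchard and Cianchi--Fusco establish; your closing remark on regularity only mentions differentiability in $x'$, so you should extend it to cover the $h$--variable as well. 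Since the lemma is only ever used inside an integral over $h$ (via the coarea formula \eqref{1e}), the a.e.\ version suffices and no actual gap remains once this is spelled out.
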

\begin{proof}
In fact, the vectors $\left(-\nabla\overline{\varrho}_{h,i}(x^{\prime}),1\right)$ and $\nabla Sf\left(x^{\prime},\overline{\varrho}_{h,i}(x^{\prime})\right)$ have the same direction, i.e., the direction of the exterior  normal vector of $[Sf]_h$ at the point $(x^{\prime},\overline{\varrho}_{h,i}(x^{\prime}))\in \partial^{\ast} [Sf]_h$. Thus for the left side of (\ref{18}) we have
\begin{eqnarray}\label{13a}
\frac{\left|\left(-\nabla\overline{\varrho}_{h,i}(x^{\prime}),1\right)\right|}{\left|\nabla Sf\left(x^{\prime},\overline{\varrho}_{h,i}(x^{\prime})\right)\right|}=\frac{1}{\left|\frac{\partial Sf}{\partial x_n}(x^{\prime},\overline{\varrho}_{h,i}(x^{\prime}))\right|}.
\end{eqnarray}

 Similarly, the vectors $\left(-\nabla\overline{\ell}_{h,i}^k(x^{\prime}),1\right)$ and $\nabla f\left(x^{\prime},\overline{\ell}^k_{h,i}(x^{\prime})\right)$ have the same direction, i.e., the direction of the exterior  normal vector to $[f]_h$ at the point $(x^{\prime},\overline{\ell}^k_{h,i}(x^{\prime}))\in \partial^{\ast}[f]_h$, we have
\begin{eqnarray}\label{18b}
\frac{\left|\left(-\nabla\overline{\ell}_{h,i}^k(x^{\prime}),1\right)\right|}{\left|\nabla f\left(x^{\prime},\overline{\ell}^k_{h,i}(x^{\prime})\right)\right|}=\frac{1}{\left|\frac{\partial f}{\partial x_n}\left(x^{\prime},\overline{\ell}^k_{h,i}(x^{\prime})\right)\right|}.
\end{eqnarray}

Moreover, we have
\begin{eqnarray}\label{18c}
\frac{\left|(-\nabla\underline{\ell}^k_{h,i}(x^{\prime}),-1)\right|}{\left|\nabla f\left(x^{\prime},-\underline{\ell}^k_{h,i}(x^{\prime})\right)\right|}=\frac{1}{\left|\frac{\partial f}{\partial x_n}\left(x^{\prime},-\underline{\ell}^k_{h,i}(x^{\prime})\right)\right|}.
\end{eqnarray}

By (\ref{13a}), (\ref{18b}) and (\ref{18c}), (\ref{18}) is equivalent to
\begin{eqnarray}\label{18d}
\frac{1}{\left|\frac{\partial Sf}{\partial x_n}(x^{\prime},\overline{\varrho}_{h,i}(x^{\prime}))\right|}=\frac{1}{2}\sum_{k=1}^{i}\frac{1}{\left|\frac{\partial f}{\partial x_n}\left(x^{\prime},-\underline{\ell}^k_{h,i}(x^{\prime})\right)\right|}+\frac{1}{2}\sum_{k=1}^{i}\frac{1}{\left|\frac{\partial f}{\partial x_n}\left(x^{\prime},\overline{\ell}^k_{h,i}(x^{\prime})\right)\right|}.
\end{eqnarray}
By \cite[Lemma 4.1]{CF06} (also see  \cite[(5.1)]{Bu97}), (\ref{18d}) is established.
\end{proof}

Since $Sf$ is symmetric with respect to $e_n^{\perp}$, i.e., $Sf(x^{\prime}+re_n)=Sf(x^{\prime}-re_n)$ for any $x^{\prime}+re_n\in SG$, it is easy to check that for $x^{\prime}\in {\rm int}[f]^{\prime}_{h,i}$,
\begin{eqnarray}\label{13}
\frac{\left|\left(-\nabla\overline{\varrho}_{h,i}(x^{\prime}),1\right)\right|}{\left|\nabla Sf\left(x^{\prime},\overline{\varrho}_{h,i}(x^{\prime})\right)\right|}&=& \frac{\left|\left(-\nabla\underline{\varrho}_{h,i}(x^{\prime}),-1\right)\right|}{\left|\nabla Sf\left(x^{\prime},-\underline{\varrho}_{h,i}(x^{\prime})\right)\right|}. \end{eqnarray}

\begin{lem}\label{1c}
If $f\in W_0^{1,\Phi}(G)$ and let
$$G_1=\left\{x\in G:\;\frac{\partial f}{\partial x_n}(x)=0\right\}$$
and
$$G_2=\left\{x\in SG:\;\frac{\partial Sf}{\partial x_n}(x)=0\right\},$$ then for any $z\in \mathbb{R}^n$, we have
\begin{eqnarray}\label{6a}
\int_{G_1}\phi(z\cdot\nabla f(x))dx=\int_{G_2}\phi(z\cdot\nabla Sf(x))dx.
\end{eqnarray}
\end{lem}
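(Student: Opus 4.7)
The strategy is to apply the co-area formula to reduce the claimed equality to a fibrewise identity on the lateral reduced boundaries $\bar{\partial}^{\ast}[f]_h$ and $\bar{\partial}^{\ast}[Sf]_h$ at each level $h$, and then to identify the corresponding gradient magnitudes as a symmetric decreasing rearrangement pair; equimeasurability then closes the argument.

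Since $\phi(0)=0$, the left-hand integrand vanishes on $G_1\cap\{\nabla f=0\}$, while on $G_1\cap\{\nabla f\neq 0\}$ the gradient is horizontal, so such points lie precisely in $\bar{\partial}^{\ast}[f]_h$ for $h=f(x)$. The co-area formula (\ref{1e}) thus rewrites the left-hand side as $\int_0^{\infty}\int_{\bar{\partial}^{\ast}[f]_h}\phi(z\cdot\nabla f)|\nabla f|^{-1}\,d\mathcal{H}^{n-1}\,dh$, and analogously on the right with $Sf,\,G_2$ in place of $f,\,G_1$. For fixed $h$ I would decompose $\bar{\partial}^{\ast}[f]_h$ over the essential projections $[f]_{h,i}'$; modulo an $\mathcal{H}^{n-1}$-negligible set of interior merging points, the fibre above $x'\in\partial^{\ast}[f]_{h,i}'\cap\partial^{\ast}[f]_h'$ is $\bigcup_k(-\underline{\ell}^k_{h,i}(x'),\overline{\ell}^k_{h,i}(x'))$ for $\bar{\partial}^{\ast}[f]_h$, and the single interval $(-\overline{\varrho}_{h,i}(x'),\overline{\varrho}_{h,i}(x'))$ of equal total length $2\overline{\varrho}_{h,i}(x')$ for $\bar{\partial}^{\ast}[Sf]_h$ by (\ref{a6}). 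On these fibres $\nabla f$ and $\nabla Sf$ are both horizontal and normal to $\partial^{\ast}[f]_{h,i}'$ along the same direction $\nu^{\ast}_i(x')$, so writing $a=z\cdot\nu^{\ast}_i(x')$, $\beta(t)=|\nabla f(x',t)|$ and $\widetilde{\beta}(t)=|\nabla Sf(x',t)|$, matters reduce to the fibrewise identity
\begin{equation*}
\sum_{k=1}^{i}\int_{-\underline{\ell}^k_{h,i}(x')}^{\overline{\ell}^k_{h,i}(x')}\frac{\phi(a\beta(t))}{\beta(t)}\,dt=\int_{-\overline{\varrho}_{h,i}(x')}^{\overline{\varrho}_{h,i}(x')}\frac{\phi(a\widetilde{\beta}(t))}{\widetilde{\beta}(t)}\,dt.
\end{equation*}

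The main obstacle is to identify $\widetilde{\beta}$ with the one-dimensional symmetric decreasing rearrangement $\beta^{\ast}$ of $\beta$. For this I fix a generic $x'$ and consider, for small $s>0$, the slice profile $F(s,t):=f(x'+s\nu_i^{\ast},t)$; by the very definition of Steiner symmetrization, its one-dimensional symmetric decreasing rearrangement in $t$ equals $F^S(s,t)=Sf(x'+s\nu_i^{\ast},t)$. A first-order expansion along $\nu_i^{\ast}$ gives $F(s,t)=h+s\beta(t)+o(s)$ on the original fibre and $F(s,t)<h$ off it for $s$ sufficiently small, so $|\{t:F(s,t)\geq h+s\gamma\}|\to|\{t\in\text{fibre}:\beta(t)\geq\gamma\}|$ as $s\to 0^+$ for every $\gamma>0$. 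Equimeasurability transfers this distributional identity to $F^S$; since $F^S(s,\cdot)$ is symmetric and decreasing in $|t|$, the limit $\widetilde{\beta}(t)=\lim_{s\to 0^+}(F^S(s,t)-h)/s$ is symmetric decreasing with the same distribution as $\beta$, and therefore equal to $\beta^{\ast}$. The technical crux is justifying this Taylor step on a full-measure set, which I would handle by a Sobolev-slicing argument restricting attention to a.e.\ $(h,x')$ on which $\nabla f$ and $\nabla Sf$ admit pointwise representations along the fibres.

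Once $\widetilde{\beta}=\beta^{\ast}$ is established, the fibrewise identity becomes $\int H(\beta)\,dt=\int H(\beta^{\ast})\,dt$ for $H(y)=\phi(ay)/y$, which is immediate from equimeasurability. Integrating against $d\mathcal{H}^{n-2}(x')$ over $\partial^{\ast}[f]_{h,i}'$, summing in $i$, and integrating in $h$ then reassembles the desired global identity.
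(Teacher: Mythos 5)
Your co-area reduction is fine as far as it goes (the set $G_1\cap\{\nabla f=0\}$ is harmless since $\phi(0)=0$), but the core of the argument has genuine gaps. First, your description of the lateral part $\bar{\partial}^{\ast}[f]_h$ is incorrect: vertical pieces of $\partial^{\ast}[f]_h$ need not lie over boundary points of the projection, and the pieces over interior points are not $\mathcal{H}^{n-1}$-negligible. For instance, take $n=2$ and $f$ independent of $x_2$ on two disjoint horizontal bands with different profiles $u(x_1)$ and $w(x_1)$; for a positive measure of levels $h$, $[f]_h$ has a vertical face of positive $\mathcal{H}^{1}$-measure over an interior point $x_1=w^{-1}(h)$ of its projection, carrying $\nabla f\neq 0$ and $\partial_2 f=0$. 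Such faces contribute a positive amount to the left side of (\ref{6a}); for them the lateral fibre is not the full fibre $\bigcup_k(-\underline{\ell}^k_{h,i},\overline{\ell}^k_{h,i})$, and the corresponding lateral fibre of $S[f]_h$ over the same $x^{\prime}$ is a union of intervals away from the origin rather than a centred interval, so the framework ``$\widetilde{\beta}$ is symmetric decreasing on $(-\overline{\varrho},\overline{\varrho})$ with the same distribution as $\beta$'' breaks down exactly where it is needed.

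Second, the key claims $\widetilde{\beta}=\beta^{\ast}$ and that $\nabla f$ points in a single direction $\nu_i^{\ast}(x^{\prime})$ along the whole fibre (needed to write $z\cdot\nabla f=a\beta(t)$ with one scalar $a$) constitute the hard content of the lemma, and your Taylor step does not prove them. The expansion $F(s,t)=h+s\beta(t)+o(s)$, the claim $F(s,t)<h$ off the fibre, and the identification $\lim_{s\to0^+}(F^S(s,t)-h)/s=|\nabla Sf(x^{\prime},t)|$ are pointwise first-order differentiability statements along vertical fibres that are $\mathcal{L}^n$-null; for $f$ and, worse, for $Sf$, which are merely Sobolev, no such expansions are available there, and a.e.\ slicing does not provide the uniformity in $t$ required to pass superlevel-set measures to the limit. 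Moreover, the one-dimensional facts that do control gradients on flat level sets (Burchard's Lemmas 4.3 and 5.1, cited in the paper) hold for $\mathcal{L}^{n-1}$-a.e.\ $x^{\prime}$, whereas your fibres live over an $\mathcal{L}^{n-1}$-null, $(n-2)$-rectifiable set of $x^{\prime}$, so they cannot be invoked in your order of integration. The paper integrates in the opposite order precisely to avoid this: it matches $G_1$ and $G_2$ fibrewise by \cite[Proposition 2.3]{CF06}, then for $\mathcal{L}^{n-1}$-a.e.\ fixed $x^{\prime}$ decomposes the vertical line by the level values attained on the flat set, where Lemma 4.3 of Burchard gives that $\nabla f$ is a.e.\ a constant vector on each such level set, Lemma 5.1 gives that $\nabla Sf$ equals the same constant on the corresponding set, the one-dimensional measures agree, and Fubini concludes. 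As written, your proposal does not establish (\ref{6a}).
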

\begin{proof}
By \cite[Proposition 2.3]{CF06}, if $f\in W_0^{1,1}(G)$, then for $\mathcal{L}^{n-1}$-a.e. $x^{\prime}\in G^{\prime}$,
\begin{eqnarray}\label{6j}
|\{x_n:(x^{\prime},x_n)\in G,\;\frac{\partial f}{\partial x_n}(x^{\prime},x_n)=0\}|=|\{x_n:(x^{\prime},x_n)\in SG,\;\frac{\partial Sf}{\partial x_n}(x^{\prime},x_n)=0\}|.
\end{eqnarray}
By (\ref{6j}), if $\mathcal{L}^n(G_1)=0$, then $\mathcal{L}^n(G_2)=0$. Thus (\ref{6a}) is established.

If $\mathcal{L}^n(G_1)>0$, then by (\ref{6j})  $\mathcal{L}^n(G_2)=\mathcal{L}^n(G_1)>0$. Moreover, by (\ref{6j}), the essential projections of $G_1$ and $G_2$ onto $e_n^{\perp}$ satisfy $G_1^{\prime,+}=G_2^{\prime,+}$. Next, we prove that for $\mathcal{L}^{n-1}$-a.e. $x^{\prime}\in G_1^{\prime,+}$,
\begin{eqnarray}\label{7g}
\int_{\{x_n:\;(x^{\prime},x_n)\in G_1\}}\phi\left(z\cdot\nabla f(x^{\prime},x_n)\right)dx_n=\int_{\{x_n:\;(x^{\prime},x_n)\in G_2\}}\phi\left(z\cdot\nabla Sf(x^{\prime},x_n)\right)dx_n.
\end{eqnarray}

For fixed $x^{\prime}\in G_1^{\prime,+}$, let
\begin{eqnarray}
D_1=\{h\in\mathbb{R}:\;h=f(x^{\prime},x_n)\;{\rm and}\;(x^{\prime},x_n)\in G_1\}
\end{eqnarray}
and
\begin{eqnarray}
D_2=\{h\in\mathbb{R}:\;h=Sf(x^{\prime},x_n)\;{\rm and}\;(x^{\prime},x_n)\in G_2\}.
\end{eqnarray}

Let $f^{x^{\prime}}(x_n):=f(x^{\prime},x_n)$ and $(Sf)^{x^{\prime}}(x_n):=Sf(x^{\prime},x_n)$. Since ${\rm sub}((Sf)^{x^{\prime}})$ is equivalent to $S({\rm sub}f^{x^{\prime}})$, we have $D_1=D_2$. For $h\in D_1$, let $$D^h_{x^{\prime}}=\{x_n:\;(x^{\prime},x_n)\in G_1,\;f(x^{\prime},x_n)=h\}$$ and
$$\tilde{D}^h_{x^{\prime}}=\{x_n:\;(x^{\prime},x_n)\in G_2,\;Sf(x^{\prime},x_n)=h\}.$$
Next, we prove that for any $h\in D_1$,
\begin{eqnarray}\label{7a}
\int_{D^h_{x^{\prime}}}\phi\left(z\cdot\nabla f(x^{\prime},x_n)\right)dx_n=\int_{\tilde{D}^h_{x^{\prime}}}\phi\left(z\cdot\nabla Sf(x^{\prime},x_n)\right)dx_n.
\end{eqnarray}

By \cite[Lemma 4.3]{Bu97}, for $\mathcal{L}^1$-a.e. $x_n\in D_{x^{\prime}}^{h}$, $\nabla f(x^{\prime},x_n)$ is equal to a constant. By \cite[Lemma 5.1]{Bu97}, for $\mathcal{L}^1$-a.e. $x_n\in \tilde{D}_{x^{\prime}}^{h}$, $\nabla Sf(x^{\prime},x_n)$ equals the same constant as $\nabla f(x^{\prime},x_n)$ for $x_n\in D_{x^{\prime}}^{h}$. Thus by $|D^h_{x^{\prime}}|=|\tilde{D}^h_{x^{\prime}}|$, (\ref{7a}) is established. By (\ref{7a}) and the arbitrariness of $h\in D_1$, (\ref{7g}) is established. By (\ref{7g}) and Fubini's theorem, (\ref{6a}) is established.
\end{proof}

\begin{pro}\label{2}
If $f\in  W_0^{1,\Phi}(G)$, then
\begin{eqnarray}\label{65}
S(B_{\phi}(f))\subset B_{\phi}(Sf).
\end{eqnarray}
\end{pro}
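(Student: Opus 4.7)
The plan is to invoke Lemma \ref{46} and reduce \eqref{65} to a pointwise statement about norms. It suffices to show that whenever $(x',t), (x', -s) \in \partial B_{\phi}(f)$ with $t \neq -s$, the midpoint $(x', (t+s)/2)$ lies in $B_{\phi}(Sf)$. Translating via Lemma \ref{47}, the hypothesis reads
\begin{equation*}
\int_G \phi((x',t) \cdot \nabla f)\,dy = \int_G \phi((x', -s)\cdot\nabla f)\,dy = |G|,
\end{equation*}
and the goal is to prove $\int_{SG} \phi((x', (t+s)/2) \cdot \nabla Sf)\,dy \leq |SG| = |G|$, i.e.\ that the LHS is bounded by the average of the two RHS integrals.

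First I would split each integral along the zero-sets $G_1 = \{\partial_n f = 0\}$ and $G_2 = \{\partial_n Sf = 0\}$. On $G_1$ the integrand $\phi((x', c)\cdot\nabla f)$ is independent of the $e_n$-component $c$, so the two RHS integrals agree on $G_1$. Combining this with Lemma \ref{1c} shows that the $G_2$-contribution on the left matches the average of the two $G_1$-contributions on the right, and the problem reduces to proving the corresponding inequality on the complementary sets. On those, I would apply the co-area formula \eqref{1e} to rewrite both integrals as $\int_0^\infty\!\int_{\breve\partial^*[\cdot]_h}\phi/|\nabla|\,d\mathcal{H}^{n-1}dh$, then unfold via \eqref{53} using the graph functions $\overline\ell^k_{h,i},\underline\ell^k_{h,i}$ of $[f]_h$ and the symmetric graph function $\varrho_{h,i}=\overline\varrho_{h,i}=\underline\varrho_{h,i}$ of $[Sf]_h$. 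With \eqref{18b} and Lemma \ref{17} collapsing $\sqrt{1+|\nabla\ell|^2}/|\nabla f|$ into $\alpha_\pm^k := 1/|\partial_n f|$ (analogously $\alpha_{Sf}$ for $Sf$), everything reduces to a pointwise inequality in $h$ and $x'\in[f]_{h,i}^{\prime}$, namely
\begin{equation*}
\alpha_{Sf}[\phi(U)+\phi(V)] \leq \tfrac{1}{2}\sum_{k=1}^{i}\bigl[\alpha_+^k(\phi(P_k)+\phi(Q_k)) + \alpha_-^k(\phi(R_k)+\phi(T_k))\bigr],
\end{equation*}
where $U,V = (x'\cdot\nabla\varrho_{h,i}\mp(t+s)/2)/\alpha_{Sf}$ and $P_k,Q_k,R_k,T_k$ are the corresponding arguments at the tops and bottoms of $[f]_h$ evaluated against $(x',t)$ and $(x',-s)$.

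The hard part will be establishing this pointwise inequality for general $i\geq 1$. The identities $\alpha_{Sf}=\tfrac{1}{2}\sum_k(\alpha_+^k+\alpha_-^k)$ from Lemma \ref{17} and $\nabla\varrho_{h,i}=\tfrac{1}{2}\sum_k(\nabla\overline\ell^k+\nabla\underline\ell^k)$ from \eqref{a6} invite a direct application of Jensen's inequality \eqref{a7}. In the case $i=1$ the argument works cleanly by pairing the $Sf$-top with the top of $[f]_h$ (tested against $(x',t)$) and the bottom of $[f]_h$ (tested against $(x',-s)$), with weights $\alpha_+^1/2$ and $\alpha_-^1/2$; the identities guarantee that the numerators add up correctly. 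For $i>1$ the same naive pairing produces a spurious factor $i$ in front of $t+s$, so one has to distribute the perturbation $(t+s)/2$ across the $2i$ boundary pieces weighted proportionally to the densities $\alpha_\pm^k$, and then invoke Lemma \ref{88} to dominate the resulting symmetric expressions $\phi(X-\tau)+\phi(X+\tau)$ (for the averaged $\tau$) by the asymmetric ones $\phi(X-t/\alpha_+^k)+\phi(X+s/\alpha_+^k)$ that appear on the right-hand side. Once the pointwise inequality is in hand, integrating over $h$ and combining with the zero-set contribution finishes the proof of \eqref{65}.
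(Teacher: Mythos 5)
Your overall skeleton matches the paper's proof exactly: reduce via Lemma \ref{46} and Lemma \ref{47} to showing $\frac{1}{|SG|}\int_{SG}\phi\bigl((y',\tfrac{t+s}{2})\cdot\nabla Sf\bigr)dx\le 1$, handle the sets $G_1,G_2$ where the $e_n$-derivative vanishes via Lemma \ref{1c}, and treat the rest by the co-area formula (\ref{1e}), the unfolding (\ref{53}), the identities of Lemma \ref{17} and (\ref{a6}), and then Jensen's inequality (\ref{a7}) together with Lemma \ref{88}. Your $i=1$ pairing (top of $S[f]_h$ against top of $[f]_h$ tested with $(y',t)$ and bottom tested with $(y',-s)$) is precisely the paper's Jensen step.

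The gap is in your mechanism for $i>1$: "distribute $(t+s)/2$ proportionally to the densities, then invoke Lemma \ref{88} to dominate the resulting symmetric pairs by the asymmetric ones." Lemma \ref{88} only compares $\phi(a\lambda-b)+\phi(-a\lambda-b)$ for two values of $\lambda>0$, i.e.\ two pairs symmetric about the same center $-b$; it says nothing about an asymmetric pair $\phi(X-t/c^k)+\phi(X+s/c^k)$, and the domination you need is in fact false in general, even after summing over all pieces. Concretely, take $\phi(x)=x^2$, one top/bottom pair with $c^1=d^1=1$ (so $a_1=1$, $\tau=\tfrac{t+s}{2}$), $t=1.05$, $s=0.95$, $y'\cdot\nabla\overline{\ell}^1=10$, $y'\cdot\nabla\underline{\ell}^1=0$: your post-Jensen intermediate quantity equals $102$, while the target right-hand side equals $101.005$ (the true left-hand side is $52$, so the proposition is not threatened—only your intermediate inequality is). The reason is that the asymmetric pair is a symmetric pair shifted off its center, and $\phi'$ at the center (governed by the tangential term $y'\cdot\nabla\overline{\ell}^k$) need not vanish. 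The correct repair is the paper's ordering: apply Lemma \ref{88} \emph{before} Jensen, to the two $Sf$-boundary terms with $a=\tfrac{t+s}{2a_1}\neq 0$ and $b=\tfrac{y'\cdot\sum_k\nabla(\overline{\ell}^k+\underline{\ell}^k)}{2a_1}$, replacing $t+s$ by $i(t+s)$; then the numerators are exactly $\sum_k(t-y'\cdot\nabla\overline{\ell}^k)+\sum_k(s-y'\cdot\nabla\underline{\ell}^k)$ and its counterpart with $-t,-s$, and a single application of (\ref{a7}) with weights $c^k_{h,i},d^k_{h,i}$ reproduces the boundary integrands of $f$ tested against $(y',t)$ and $(y',-s)$ with no spurious factor; integrating in $h$ and adding the Lemma \ref{1c} contribution then closes the argument as you outlined.
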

\begin{proof}
 Let
$$\|(y^{\prime},t)\|_{f,\phi}=1\;\;{\rm and}\;\;\|(y^{\prime},-s)\|_{f,\phi}=1,$$
with $t\neq-s$.
By Lemma \ref{47}, this means that
\begin{eqnarray}\label{62}
\frac{1}{|G|}\int_{G}\phi\left((y^{\prime},t)\cdot \nabla f(x)\right)dx=1
\end{eqnarray}
and
\begin{eqnarray}\label{63}
\frac{1}{|G|}\int_{G}\phi\left((y^{\prime},-s)\cdot \nabla f(x)\right)dx=1.
\end{eqnarray}

By Lemma \ref{46}, the desired inclusion will been established if we can show that
\begin{eqnarray}\label{64}
\|(y^{\prime},t/2+s/2)\|_{Sf,\phi}\leq 1.
\end{eqnarray}

For $x^{\prime}\in{\rm int}[f]^{\prime}_{h,i}$ and $k=1,2,\cdots,i$, let
\begin{eqnarray}\label{10b}
c^k_{h,i}=\frac{|(-\nabla\overline{\ell}^k_{h,i}(x^{\prime}),1)|}{|\nabla f(x^{\prime},\overline{\ell}^k_{h,i}(x^{\prime}))|}\;\;{\rm and}\;\;d^k_{h,i}=\frac{|(-\nabla\underline{\ell}^k_{h,i}(x^{\prime}),-1)|}{|\nabla f(x^{\prime},-\underline{\ell}^k_{h,i}(x^{\prime}))|}.
\end{eqnarray}

For $x^{\prime}\in {\rm int}[f]^{\prime}_{h,i}$, let
\begin{eqnarray}
a_1=\frac{|(-\nabla\overline{\varrho}_{h,i}(x^{\prime}),1)|}{|\nabla Sf(x^{\prime},\overline{\varrho}_{h,i}(x^{\prime}))|}\;\;{\rm and}\;\;
a_2=\frac{|(-\nabla\underline{\varrho}_{h,i}(x^{\prime}),-1)|}{|\nabla Sf(x^{\prime},-\underline{\varrho}_{h,i}(x^{\prime}))|}.
\end{eqnarray}
By (\ref{13}) and Lemma \ref{17},
\begin{eqnarray}\label{7b}
a_1=a_2=\frac{1}{2}\sum_{k=1}^{i}c^k_{h,i}+\frac{1}{2}\sum_{k=1}^{i}d^k_{h,i}.
\end{eqnarray}
Since $\nabla Sf(x^{\prime},\overline{\varrho}_{h,i}(x^{\prime}))$ and $(-\nabla\overline{\varrho}_{h,i}(x^{\prime}),1)$ have the same directions,
\begin{eqnarray}\label{8b}
\nabla Sf(x^{\prime},\overline{\varrho}_{h,i}(x^{\prime}))=\frac{(-\nabla\overline{\varrho}_{h,i}(x^{\prime}),1)}{a_1}.
\end{eqnarray}
Similarly, we have
\begin{eqnarray}\label{9b}
\nabla Sf(x^{\prime},-\underline{\varrho}_{h,i}(x^{\prime}))=\frac{(-\nabla\underline{\varrho}_{h,i}(x^{\prime}),-1)}{a_2},
\end{eqnarray}
\begin{eqnarray}\label{10a}
\nabla f(x^{\prime},\overline{\ell}^k_{h,i}(x^{\prime}))=\frac{(-\nabla\overline{\ell}^k_{h,i}(x^{\prime}),1)}{c^k_{h,i}},\;\;k=1,2,\dots,i
\end{eqnarray}
and
\begin{eqnarray}\label{10c}
\nabla f(x^{\prime},-\underline{\ell}^k_{h,i}(x^{\prime}))=\frac{(-\nabla\underline{\ell}^k_{h,i}(x^{\prime}),-1)}{d^k_{h,i}},\;\;k=1,2,\dots,i.
\end{eqnarray}

By (\ref{53}), (\ref{8b}), (\ref{9b}), (\ref{a6}), Lemma \ref{88} and (\ref{7b}), we have that

\begin{eqnarray}\label{54}
&&\int_{\partial^{\ast}[Sf]_h/G_2}\phi\left((y^{\prime},t/2+s/2)\cdot \nabla Sf(x)\right)|\nabla Sf(x)|^{-1}d\mathcal {H}^{n-1}(x)\nonumber\\
&=&\int_{([f]_h/G_1)^{\prime}}\phi\left((y^{\prime},t/2+s/2)\cdot \nabla Sf(x^{\prime},\overline{\varrho}_h(x^{\prime}))\right)a_1 dx^{\prime}\nonumber\\
&&+\int_{([f]_h/G_1)^{\prime}}\phi\left((y^{\prime},t/2+s/2)\cdot \nabla Sf(x^{\prime},-\underline{\varrho}_h(x^{\prime}))\right)a_2dx^{\prime}\nonumber\\
&=&\sum_{i=1}^{\infty}\int_{{\rm int}[f]^{\prime}_{h,i}}\phi\left(\frac{1}{2a_1}\left(t+s-y^{\prime}\cdot\sum_{k=1}^{i}\nabla(\overline{\ell}^k_{h,i}+\underline{\ell}^k_{h,i})(x^{\prime})\right)\right)a_1 dx^{\prime}\nonumber\\
&&+\sum_{i=1}^{\infty}\int_{{\rm int}[f]^{\prime}_{h,i}}\phi\left(\frac{1}{2a_2}\left(-t-s-y^{\prime}\cdot\sum_{k=1}^{i}\nabla(\overline{\ell}^k_{h,i}+\underline{\ell}^k_{h,i})(x^{\prime})\right)\right)a_2dx^{\prime}\nonumber\\
&\leq&\sum_{i=1}^{\infty}\int_{{\rm int}[f]^{\prime}_{h,i}}\phi\left(\frac{1}{2a_1}\left(i(t+s)-y^{\prime}\cdot\sum_{k=1}^{i}\nabla(\overline{\ell}^k_{h,i}+\underline{\ell}^k_{h,i})(x^{\prime})\right)\right)a_1 dx^{\prime}\nonumber\\
&&+\sum_{i=1}^{\infty}\int_{{\rm int}[f]^{\prime}_{h,i}}\phi\left(\frac{1}{2a_2}\left(i(-t-s)-y^{\prime}\cdot\sum_{k=1}^{i}\nabla(\overline{\ell}^k_{h,i}+\underline{\ell}^k_{h,i})(x^{\prime})\right)\right)a_2dx^{\prime}\nonumber\\
&=&\frac{1}{2}\sum_{i=1}^{\infty}\int_{{\rm int}[f]^{\prime}_{h,i}}\phi\left(\frac{\sum_{k=1}^{i}(t-y^{\prime}\cdot\nabla\overline{\ell}^k_{h,i}(x^{\prime}))+\sum_{k=1}^{i}(s-y^{\prime}\cdot\nabla\underline{\ell}^k_{h,i}(x^{\prime}))}{\sum_{k=1}^{i}(c^k_{h,i}+d^k_{h,i})}\right)\sum_{k=1}^{i}(c^k_{h,i}+d^k_{h,i})dx^{\prime}\nonumber\\
&&+\frac{1}{2}\sum_{i=1}^{\infty}\int_{{\rm int}[f]^{\prime}_{h,i}}\phi\left(\frac{\sum_{k=1}^{i}(-t-y^{\prime}\cdot\nabla\underline{\ell}^k_{h,i}(x^{\prime}))+\sum_{k=1}^{i}(-s-y^{\prime}\cdot\nabla\overline{\ell}^k_{h,i}(x^{\prime}))}{\sum_{k=1}^{i}(d^k_{h,i}+c^k_{h,i})}\right)\sum_{k=1}^{i}(d^k_{h,i}+c^k_{h,i})dx^{\prime}\nonumber\\
\end{eqnarray}
By (\ref{a7}), (\ref{10a}), (\ref{10c}) and (\ref{53}) again, the last expression
\begin{eqnarray}
&\leq&\frac{1}{2}\sum_{i=1}^{\infty}\int_{{\rm int}[f]^{\prime}_{h,i}}\sum_{k=1}^{i}\phi\left((y^{\prime},t)\cdot \nabla f(x^{\prime},\overline{\ell}^k_{h,i}(x^{\prime}))\right)c^k_{h,i}dx^{\prime}\nonumber\\
&&+\frac{1}{2}\sum_{i=1}^{\infty}\int_{{\rm int}[f]^{\prime}_{h,i}}\sum_{k=1}^{i}\phi\left((y^{\prime},-s)\cdot \nabla f(x^{\prime},-\underline{\ell}^k_{h,i}(x^{\prime}))\right)d^k_{h,i}dx^{\prime}\nonumber\\
&&+\frac{1}{2}\sum_{i=1}^{\infty}\int_{{\rm int}[f]^{\prime}_{h,i}}\sum_{k=1}^{i}\phi\left((y^{\prime},t)\cdot \nabla f(x^{\prime},-\underline{\ell}^k_{h,i}(x^{\prime}))\right)d^k_{h,i}dx^{\prime}\nonumber\\
&&+\frac{1}{2}\sum_{i=1}^{\infty}\int_{{\rm int}[f]^{\prime}_{h,i}}\sum_{k=1}^{i}\phi\left((y^{\prime},-s)\cdot \nabla f(x^{\prime},\overline{\ell}^k_{h,i}(x^{\prime}))\right)c^k_{h,i}dx^{\prime}\nonumber\\
&=&\frac{1}{2}\int_{\partial^{\ast}[f]_h/G_1}\phi\left((y^{\prime},t)\cdot \nabla f(x)\right)|\nabla f(x)|^{-1}d\mathcal {H}^{n-1}(x)\nonumber\\
&~&+\frac{1}{2}\int_{\partial^{\ast}[f]_h/G_1}\phi\left((y^{\prime},-s)\cdot \nabla f(x)\right)|\nabla f(x)|^{-1}d\mathcal {H}^{n-1}(x).
\end{eqnarray}
Integrating both sides of the above inequality on $[0,\infty)$ with respect to $h$, it follows from the co-area formula (\ref{1e}), Lemma \ref{1c}, (\ref{62}) and (\ref{63}),  that
\begin{eqnarray}
&&\int_{SG}\phi\left((y^{\prime},t/2+s/2)\cdot \nabla Sf(x)\right)dx\nonumber\\
&\leq&\frac{1}{2}\int_{G}\phi\left((y^{\prime},t)\cdot \nabla f(x)\right)dx+\frac{1}{2}\int_{G}\phi\left((y^{\prime},-s)\cdot \nabla f(x)\right)dx\nonumber\\
&=&1.
\end{eqnarray}
This and a glance at definition (\ref{48}), gives (\ref{64}), and thus (\ref{65}) is proved.
\end{proof}

{\bf  \noindent Proof of Theorem \ref{1}.} By Theorem \ref{19f}, there exists a sequence of  directions $\{u_i\}$ such that the sequence defined by $f_{i+1}=S_{u_i}f_i$ (where $i=0,1,\dots$ and $f_0=f$) converges to $f^{\star}$ weakly in $W^{1,1}$. By Lemma \ref{72}, there exists a convex body $K_0$ such that $B_{\phi}(f_i)$ converges to $K_0$ and $K_0\subset B_{\phi}(f^{\star})$. Thus by Proposition \ref{2},
$$|B_{\phi}(f)|\leq|B_{\phi}(f_1)|\leq\cdots\leq|B_{\phi}(f_i)|\rightarrow |K_0|\leq |B_{\phi}(f^{\star})|.$$
Thus, we have
$$\mathcal{E}_{\phi}(f^{\star})\leq\mathcal{E}_{\phi}(f).$$ \qed

\section{The affine Orlicz  P\'olya-Szeg\" o principle and the Orlicz-Petty projection inequality for star bodies}\label{s6}

In \cite{LYZ10}, the {\it Orlicz projection body $\Pi_{\phi}K$} of $K\in\mathcal{K}^n_o$ is defined as the body whose support function is given by
\begin{eqnarray}
h_{\Pi_{\phi}K}(u)=\inf\left\{\lambda>0:\int_{\partial K}\phi\left(\frac{u\cdot v_K(y)}{\lambda h_K(v_K(y))}\right)h_K(v_K(y))d\mathcal{H}^{n-1}(y)\leq n|K|\right\}.
\end{eqnarray}
Lutwak, Yang and Zhang \cite{LYZ10}  proved the following remarkable inequality.

{\bf \noindent Orlicz-Petty Projection inequality.} Suppose $\phi\in\mathcal{N}$. If $K\in\mathcal{K}^n_o$, then the volume ratio
$$|\Pi^{\ast}_{\phi}K|/|K|$$
is maximized when $K$ is an ellipsoid centered at the origin.

Zhang \cite{Zhang99} gave the definition of the projection body for a compact set with piecewise $C^1$. Similarly, for $K\in\mathcal{S}_o^n\cap \mathcal{C}^n$, we define the {\it Orlicz projection body $\Pi_{\phi}K$} of $K$ as
\begin{eqnarray}\label{19a}
h_{\Pi_{\phi}K}(u)=\inf\left\{\lambda>0:\int_{\partial^{\ast} K}\phi\left(\frac{u\cdot v_K(y)}{\lambda y\cdot v_K(y)}\right)y\cdot v_K(y)d\mathcal{H}^{n-1}(y)\leq n|K|\right\}.
\end{eqnarray}
It is easily seen that $\Pi_{\phi}K$ is a convex body  containing the origin in its interior.

In this section, we shall prove that the affine Orlicz  P\'olya-Szeg\" o principle (\ref{a1}) implies the following Orlicz-Petty projection inequality for star bodies.

{\bf Orlicz-Petty projection inequality for star bodies.} Suppose $\phi\in\mathcal{N}$. If $K\in\mathcal{S}^n_o\cap\mathcal{C}^n$, then the volume ratio
$$|\Pi^{\ast}_{\phi}K|/|K|$$
is maximized when $K$ is an ellipsoid centered at the origin.

In (\ref{a1}), let
\begin{eqnarray}\label{7c}
f(x)=1-\|x\|_K,\;\;\; x\in {\rm int} K,
\end{eqnarray}
where $K\in \mathcal{S}_o^n\cap\mathcal{C}^n$. Since $[f]_h=(1-h)K$ is a set of finite perimeter for any $h>0$ and $\int_{{\rm int}K}|\nabla f|dx=\int_{0}^{1}P([f]_h)dh<\infty$, $f\in W_0^{1,1}({\rm int}K)$.
 It is easy to check that
\begin{eqnarray}\label{1i}
f^{\star}(x)=1-\|x\|_{K^{\star}},\;\;\; x\in {\rm int} K^{\star}.
\end{eqnarray}
For $f$ as in (\ref{7c}), since $\phi(0)=0$ and $0<f\leq 1$, by (\ref{1e}), (\ref{2h}) and (\ref{2i}), we have
\begin{eqnarray}\label{28}
&&\frac{1}{|K|}\int_{{\rm int K}}\phi\left(\frac{u\cdot\nabla f(x)}{\lambda}\right)dx\nonumber\\
&=&\frac{1}{|K|}\int_{\{x\in {\rm int}K,\nabla f(x)\neq 0\}}\phi\left(\frac{u\cdot\nabla f(x)}{\lambda}\right)dx\nonumber\\
&=&\frac{1}{|K|}\int^{1}_{0}\int_{(1-h)\partial^{\ast} K}\phi\left(\frac{u\cdot\nabla f(x)}{\lambda}\right)|\nabla f(x)|^{-1}d\mathcal{H}^{n-1}(x)dh\nonumber\\
&=&\frac{1}{|K|}\int^{1}_{0}\int_{\partial^{\ast} K}\phi\left(\frac{-u\cdot v_K(y)}{\lambda y\cdot v_K(y)}\right)y\cdot v_K(y)(1-h)^{n-1}d\mathcal{H}^{n-1}(y)dh\nonumber\\
&=&\frac{1}{n|K|}\int_{\partial^{\ast} K}\phi\left(\frac{-u\cdot v_K(y)}{\lambda y\cdot v_K(y)}\right)y\cdot v_K(y)d\mathcal{H}^{n-1}(y).
\end{eqnarray}

By (\ref{48}), (\ref{19a}) and (\ref{28}), we have
\begin{eqnarray}\label{28a}
\|u\|_{f,\phi}&=&\inf\left\{\lambda>0:\frac{1}{|K|}\int_{{\rm int}K}\phi\left(\frac{u\cdot\nabla f(x)}{\lambda}\right)dx\leq1\right\}\nonumber\\
&=&\inf\left\{\lambda>0:\int_{\partial^{\ast} K}\phi\left(\frac{-u\cdot v(y)}{\lambda y\cdot v(y)}\right)y\cdot v(y)d\mathcal{H}^{n-1}(y)\leq n|K|\right\}\nonumber\\
&=&h_{\Pi_{\phi}K}(-u).
\end{eqnarray}
By (\ref{41a}), (\ref{28a}) and the definition of $B_{\phi}(f)$, we have
\begin{eqnarray}\label{34}
B_{\phi}(f)=-\Pi^{\ast}_{\phi}K.
\end{eqnarray}
By (\ref{34}), $|K^{\ast}|=|K|$, $\mathcal{E}_{\phi}(f^{\star})\leq \mathcal{E}_{\phi}(f)$ and $\mathcal{E}_{\phi}(f)=|B_{\phi}(f)|^{-1/n}$, we have
\begin{eqnarray}\label{7e}
\frac{|\Pi^{\ast}_{\phi}K|}{|K|}=\frac{|B_{\phi}(f)|}{|K|}\leq \frac{|B_{\phi}(f^{\star})|}{|K^{\star}|}=\frac{|\Pi^{\ast}_{\phi}K^{\star}|}{|K^{\star}|},
\end{eqnarray}
which is the Orlicz projection inequality for star bodies.

\section{The Orlicz and affine Orlicz P\'olya-Szeg\" o inequalities}\label{s7}

In this section, we shall prove that the affine Orlicz P\'olya-Szeg\" o inequality (\ref{a1}) is essentially stronger than the Euclidean Orlicz P\'olya-Szeg\" o inequality (\ref{1d}). Throughout this section, let $\phi\in\mathcal{N}$  be an even function.\\
\begin{lem}\label{7.1}
For $f\in  W_0^{n,\Phi}(G)$, then
\begin{eqnarray}
\frac{2\omega_{n-1}}{n\omega_n^{(n+1)/n}}\leq \frac{\mathcal {E}_{\phi}(f^{\star})}{\|\nabla f^{\star}\|_{\Phi}}<\omega_n^{-\frac{1}{n}}.
\end{eqnarray}

\end{lem}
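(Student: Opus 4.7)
The plan is to exploit the rotational symmetry of $f^{\star}$ together with the evenness of $\phi$ to reduce both sides of the ratio to one-dimensional integrals, and then to bracket it via elementary bounds on a spherical average of $\Phi$.

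First I would write $f^{\star}(x)=g(|x|)$ for a nonincreasing $g$ on $[0,R)$ with $R=(|G|/\omega_n)^{1/n}$, so that $|\nabla f^{\star}(x)|=h(|x|)$ with $h=|g'|$ and $\nabla_v f^{\star}(x)=g'(|x|)(x\cdot v)/|x|$. Since $\phi$ is even, $\phi(\nabla_v f^{\star}/\lambda)=\Phi(h(|x|)|u\cdot v|/\lambda)$ with $u=x/|x|$; by rotational invariance $\|v\|_{f^{\star},\phi}$ is independent of $v\in S^{n-1}$, so (\ref{1a}) gives $\mathcal{E}_{\phi}(f^{\star})=\omega_n^{-1/n}\|v\|_{f^{\star},\phi}$. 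Setting $\lambda=\|v\|_{f^{\star},\phi}$ and $\mu=\|\nabla f^{\star}\|_{\Phi}$, interpreted with the same $1/|G|$-normalization as in (\ref{52}) so that the ratio $\mathcal{E}_{\phi}(f^{\star})/\|\nabla f^{\star}\|_{\Phi}$ is dilation-invariant, and letting $J(s):=\int_{S^{n-1}}\Phi(s|u\cdot v|)\,d\sigma(u)$, passing to spherical coordinates turns the two defining identities into
\begin{align*}
\int_0^R r^{n-1}J(h(r)/\lambda)\,dr &= \omega_n R^n,\\
n\omega_n\int_0^R r^{n-1}\Phi(h(r)/\mu)\,dr &= \omega_n R^n.
\end{align*}

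For the upper bound I would use $|u\cdot v|\le 1$ with strict inequality outside a $\sigma$-null set together with strict monotonicity of $\Phi$, yielding $J(s)<n\omega_n\Phi(s)$ for every $s>0$. Because $f^{\star}$ is nontrivial, $h>0$ on a set of positive $r^{n-1}dr$ measure, so comparing the two displayed identities forces $\lambda<\mu$, which gives $\mathcal{E}_{\phi}(f^{\star})/\|\nabla f^{\star}\|_{\Phi}<\omega_n^{-1/n}$. For the lower bound I would apply Jensen's inequality to the convex function $\Phi$ against the probability measure $d\sigma/(n\omega_n)$ on $S^{n-1}$, combined with the classical identity $\int_{S^{n-1}}|u\cdot v|\,d\sigma(u)=2\omega_{n-1}$, to obtain $J(s)\ge n\omega_n\Phi(2s\omega_{n-1}/(n\omega_n))$; substituting this into the first identity and comparing with the second yields $\lambda/\mu\ge 2\omega_{n-1}/(n\omega_n)$, which multiplied by $\omega_n^{-1/n}$ produces the claimed lower bound. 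The only delicate points will be fixing the correct normalization in $\|\nabla f^{\star}\|_{\Phi}$ to ensure scale-invariance of the ratio, and verifying strictness in the upper bound via positivity of $h$ on a set of positive measure; neither presents a genuine obstacle.
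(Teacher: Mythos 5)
Your proposal is correct and follows essentially the same route as the paper: reduce to a constant $\|v\|_{f^{\star},\phi}$ via rotational symmetry so that $\mathcal{E}_\phi(f^{\star})=\omega_n^{-1/n}\|v\|_{f^{\star},\phi}$, pass to spherical coordinates, apply Jensen's inequality over $S^{n-1}$ with $\int_{S^{n-1}}|u\cdot v|\,du=2\omega_{n-1}$ for the lower bound, and use $|u\cdot v|\le 1$ together with strict monotonicity of $\Phi$ for the strict upper bound. The only cosmetic difference is that you package the spherical integral as $J(s)$ and compare the two defining identities at $\lambda=\|v\|_{f^{\star},\phi}$ and $\mu=\|\nabla f^{\star}\|_\Phi$, whereas the paper phrases the same comparison as an inclusion of the sets $\{\lambda>0:\dots\le|G^{\star}|\}$ inside the infimum, which sidesteps having to invoke attainment of the Luxemburg infimum; both are fine given the strict monotonicity of $\Phi$.
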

\begin{proof} For $v\in S^{n-1}$, since $\phi\in\mathcal{N}$ is an even function and  by Jensen's inequality, we have
\begin{eqnarray}\label{66}
\|v\|_{f^{\star},\phi}&=&\inf\left\{\lambda> 0:\int_{G^{\ast}}\phi\left(\frac{v\cdot \nabla f^{\star}(x)}{\lambda}\right)dx\leq |G^{\star}|\right\}\nonumber\\
&=&\inf\left\{\lambda> 0:\int^{D_{G^{\ast}}/2}_{0}\int_{S^{n-1}}\Phi\left(\frac{|v\cdot u|\cdot |\nabla f^{\star}(ru)|}{\lambda}\right)r^{n-1}du dr\leq |G^{\star}|\right\}\nonumber\\
&\geq&\inf\left\{\lambda> 0:n\omega_{n}\int^{D_{G^{\ast}}/2}_{0}\Phi\left(\frac{1}{n\omega_n}\frac{\int_{S^{n-1}}|v\cdot u| du}{\lambda}|\nabla f^{\star}(ru)|\right)r^{n-1} dr\leq |G^{\star}|\right\}\nonumber\\
&=&\inf\left\{\lambda> 0:n\omega_{n}\int^{D_{G^{\ast}}/2}_{0}\Phi\left(\frac{1}{n\omega_n}\frac{2\omega_{n-1}}{\lambda}|\nabla f^{\star}(ru)|\right) r^{n-1}dr\leq |G^{\star}|\right\}\nonumber\\
&=&\inf\left\{\lambda> 0: \int^{D_{G^{\ast}}/2}_{0}\int_{S^{n-1}}\Phi\left(\frac{2\omega_{n-1}}{n\omega_n}\frac{|\nabla f^{\star}(ru)|}{\lambda}\right)r^{n-1}du dr\leq |G^{\star}|\right\}\nonumber\\
&=&\frac{2\omega_{n-1}}{n\omega_n}\|\nabla f^{\star}\|_{\Phi}.
\end{eqnarray}
Thus
\begin{eqnarray}\label{61}
\|v\|_{f^{\star},\phi}\geq \frac{2\omega_{n-1}}{n\omega_n}\|\nabla f^{\star}\|_{\Phi}.
\end{eqnarray}
Because that $\phi$ is strictly increasing on $[0,\infty)$, thus by the second equality of (\ref{66}), for any $f\in W_0^{n,\Phi}(G)$,
\begin{eqnarray}\label{61a}
\|v\|_{f^{\star},\phi}<\|\nabla f^{\star}\|_{\Phi}.
\end{eqnarray}
By (\ref{61}) and (\ref{61a}), for any $f\in W_0^{n,\Phi}(G)$,
\begin{eqnarray}\label{68}
\frac{2\omega_{n-1}}{n\omega_n}\leq \frac{\|v\|_{f^{\star},\phi}}{\|\nabla f^{\star}\|_{\Phi}}<1.
\end{eqnarray}

Combining (\ref{1a}) and
(\ref{68}) gives the desired inequality.
\end{proof}

\begin{lem}\label{7.2}
If $f\in  W_0^{n,\Phi}(G)$, then $\mathcal{E}_{\phi}(f)/\|\nabla f\|_{\Phi}$ is  uniformly bounded from above by a positive constant, i.e.,
\begin{eqnarray}\label{18e}
\sup\left\{\frac{\mathcal{E}_{\phi}(f)}{\|\nabla f\|_{\Phi}}:\;\;f\in  W_0^{n,\Phi}(G) \right\}\leq \omega_n^{-\frac{1}{n}}.
\end{eqnarray}
Moreover, $\mathcal{E}_{\phi}(f)/\|\nabla f\|_{\Phi}$ is not uniformly bounded from below by any positive constant, i.e.,
\begin{eqnarray}
\inf\left\{\frac{\mathcal{E}_{\phi}(f)}{\|\nabla f\|_{\Phi}}:\;\;f\in  W_0^{n,\Phi}(G) \right\}=0.
\end{eqnarray}
\end{lem}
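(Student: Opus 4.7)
\textbf{Proof proposal for Lemma \ref{7.2}.} For the upper bound I would first establish the pointwise inequality $\|v\|_{f,\phi}\le\|\nabla f\|_\Phi$ for every $v\in S^{n-1}$ and then insert it into \eqref{1a}. Since $\phi(t)\le\max\{\phi(t),\phi(-t)\}=\Phi(|t|)$ and $\Phi$ is even and increasing on $[0,\infty)$, the Cauchy--Schwarz bound $|v\cdot\nabla f(x)|\le|\nabla f(x)|$ gives
$$\phi\!\left(\frac{v\cdot\nabla f(x)}{\lambda}\right)\le\Phi\!\left(\frac{|v\cdot\nabla f(x)|}{\lambda}\right)\le\Phi\!\left(\frac{|\nabla f(x)|}{\lambda}\right)$$
for every $\lambda>0$. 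Choosing $\lambda=\|\nabla f\|_\Phi$ makes the right-hand side integrate (with the $1/|G|$ normalization used in Lemma \ref{7.1}) to at most $1$, so the same $\lambda$ is admissible in the infimum \eqref{52} defining $\|v\|_{f,\phi}$. Hence $\|v\|_{f,\phi}\le\|\nabla f\|_\Phi$, and plugging this into \eqref{1a} gives
$$\mathcal{E}_\phi(f)^{-n}=\frac{1}{n}\int_{S^{n-1}}\|v\|_{f,\phi}^{-n}\,dv\ge\omega_n\,\|\nabla f\|_\Phi^{-n},$$
which is precisely \eqref{18e}.

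For the infimum assertion, the plan is to use the $SL(n)$-invariance of $\mathcal{E}_\phi$ (Lemma \ref{9c}) to hold $\mathcal{E}_\phi$ constant while blowing up $\|\nabla f\|_\Phi$ by a volume-preserving stretching in one direction. Fix any nonzero $f\in W_0^{1,\Phi}(G)$; since both $\mathcal{E}_\phi(f)$ and $\|\nabla f\|_\Phi$ are invariant under rotations, after rotating coordinates we may assume $\|\partial_1 f\|_\Phi>0$. For $\tau>1$, set
$$A_\tau:=\mathrm{diag}\!\left(\tau,\tau^{-1/(n-1)},\ldots,\tau^{-1/(n-1)}\right)\in SL(n),\qquad G_\tau:=A_\tau^{-1}G,\qquad f_\tau(x):=f(A_\tau x),$$
so $|G_\tau|=|G|$ and $f_\tau\in W_0^{1,\Phi}(G_\tau)$. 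By Lemma \ref{9c}, $\mathcal{E}_\phi(f_\tau)=\mathcal{E}_\phi(f)$ is independent of $\tau$.

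On the other hand, $\nabla f_\tau(x)=A_\tau^t\nabla f(A_\tau x)$, whose first coordinate equals $\tau\,\partial_1 f(A_\tau x)$, so $|\nabla f_\tau(x)|\ge\tau\,|\partial_1 f(A_\tau x)|$. Combining this with the monotonicity of $\Phi$ on $[0,\infty)$ and the change of variables $y=A_\tau x$ (Jacobian $1$), every $\lambda>0$ admissible for $\|\nabla f_\tau\|_\Phi$ satisfies
$$1\ge\frac{1}{|G_\tau|}\int_{G_\tau}\Phi\!\left(\frac{|\nabla f_\tau(x)|}{\lambda}\right)dx\ge\frac{1}{|G|}\int_{G}\Phi\!\left(\frac{\tau\,|\partial_1 f(y)|}{\lambda}\right)dy,$$
so $\lambda/\tau$ is admissible for $\|\partial_1 f\|_\Phi$, forcing $\|\nabla f_\tau\|_\Phi\ge\tau\,\|\partial_1 f\|_\Phi$. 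Letting $\tau\to\infty$ yields $\mathcal{E}_\phi(f_\tau)/\|\nabla f_\tau\|_\Phi\to0$, and the infimum is zero. The only delicate point in the whole argument is reconciling the two Luxemburg-norm conventions visible in \eqref{33} and \eqref{52}: one must give both $\|v\|_{f,\phi}$ and $\|\nabla f\|_\Phi$ the same $1/|G|$-normalization (as is tacitly done in Lemma \ref{7.1}), after which convexity of $\phi$, monotonicity of $\Phi$ on $[0,\infty)$, and the affine invariance from Lemma \ref{9c} do the rest.
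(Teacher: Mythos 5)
Your proof is correct and follows essentially the same two-step approach as the paper: for the upper bound, the pointwise estimate $\|v\|_{f,\phi}\le\|\nabla f\|_\Phi$ via monotonicity of $\Phi$ and $|v\cdot\nabla f|\le|\nabla f|$, inserted into \eqref{1a}; for the infimum, $SL(n)$-invariance of $\mathcal{E}_\phi$ (Lemma \ref{9c}) together with a volume-preserving diagonal stretch that drives $\|\nabla f_\tau\|_\Phi\to\infty$. The only differences are cosmetic (the paper stretches by $\mathrm{diag}(\tau,\tau^{-1},1,\dots,1)$ rather than your $\mathrm{diag}(\tau,\tau^{-1/(n-1)},\dots)$, and your write-up is a bit more careful in isolating a nonvanishing $\|\partial_1 f\|_\Phi$ by a preliminary rotation and in making the $1/|G|$-normalization explicit).
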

\begin{proof}
On the one hand, since $\phi$ is even and strictly increasing on $[0,\infty)$,
\begin{eqnarray}\label{18a}
\|v\|_{f,\phi}&=&\inf\left\{\lambda>0:\int_{G}\phi\left(\frac{|v\cdot u||\nabla f(x)|}{\lambda}\right)dx\leq |G|\right\}\nonumber\\
&\leq&\inf\left\{\lambda>0:\int_{G}\Phi\left(\frac{|\nabla f(x)|}{\lambda}\right)dx\leq |G|\right\}\nonumber\\
&=&\|\nabla f\|_{\Phi}.
\end{eqnarray}
By (\ref{1a}) and (\ref{18a})
$$\mathcal{E}_{\phi}(f)=\left(\frac{1}{n}\int_{S^{n-1}}\|v\|^{-n}_{f,\phi}dv\right)^{-\frac{1}{n}}\leq \omega_n^{-\frac{1}{n}}\|\nabla f\|_{\Phi}.$$
Thus, (\ref{18e}) is established.

On the other hand, let $f_1(x)=f(Ax)$, where $A\in SL(n)$. By Lemma \ref{9c}, $\mathcal{E}_{\phi}(f)=\mathcal{E}_{\phi}(f_1)$. Let
\begin{equation}
A=\left(\begin{matrix}
\tau&0&0&\cdots&0&\\
0&\frac{1}{\tau}&0&\cdots&0&\\
0&0&1&\cdots&0&\\
&&&\ddots&&\\
0&0&0&\cdots&1&
\end{matrix}\right),
\end{equation}
where $\tau>0$.   Let $D(f_1)$ denote the domain of $f_1$. Then $D(f_1)=A^{-1}G$.
Let $\eta=(\tau,1/\tau,1,\cdots,1)$ be a $n$-dimensional vector,  we have
\begin{eqnarray}
\|\nabla f_1\|_{\Phi}&=&\inf\left\{\lambda>0:\;\frac{1}{|A^{-1}G|}\int_{A^{-1}G}\Phi\left(\frac{|A^t\nabla f(Ax)|}{\lambda}\right)dx\leq1\right\}\nonumber\\
&=&\inf\left\{\lambda>0:\;\frac{1}{|G|}\int_{G}\Phi\left(\frac{|\eta\cdot\nabla f(x^{\prime})|}{\lambda}\right)|A^{-1}|dx^{\prime}\leq1\right\}.
\end{eqnarray}

Since $\phi$ is strictly monotone increasing on $(0,\infty)$,  $\|\nabla f_1\|_{\Phi}\rightarrow \infty$ when $\tau\rightarrow \infty$, which implies
$$\inf\left\{\frac{\mathcal{E}_{\phi}(f)}{\|\nabla f\|_{\Phi}}:\;\;f\in  W^{n,\Phi}(G) \right\}=0.$$
\end{proof}

By Theorem \ref{1}, Lemma \ref{7.1} and Lemma \ref{7.2}, we have
\begin{eqnarray}\label{7.5}
\frac{2\omega_{n-1}}{n\omega_n^{(n+1)/n}}\|\nabla f^{\star}\|_{\Phi}\leq \mathcal{E}_{\phi}(f^{\star})\leq \mathcal{E}_{\phi}(f)\leq \omega^{-\frac{1}{n}}_n\|\nabla f\|_{\Phi}.
\end{eqnarray}
By (\ref{7.5}), the following theorem is established.
\begin{thm}\label{7.3}
For $f\in W_0^{n,\Phi}(G)$, if
$$\mathcal{E}_{\phi}(f^{\star})\leq \mathcal{E}_{\phi}(f),$$
then
$$\|\nabla f^{\star}\|_{\Phi}\leq c_1\|\nabla f\|_{\Phi},$$
where $c_1=\frac{n\omega_n}{2\omega_{n-1}}$ is a constant.
\end{thm}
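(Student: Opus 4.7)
The plan is essentially to chain together the three inequalities already encoded in display (\ref{7.5}). The hypothesis of the theorem provides the middle link $\mathcal{E}_{\phi}(f^{\star})\leq \mathcal{E}_{\phi}(f)$, Lemma \ref{7.1} supplies the left-hand lower bound $\mathcal{E}_{\phi}(f^{\star})\geq \frac{2\omega_{n-1}}{n\omega_n^{(n+1)/n}}\|\nabla f^{\star}\|_{\Phi}$, and Lemma \ref{7.2} supplies the right-hand upper bound $\mathcal{E}_{\phi}(f)\leq \omega_n^{-1/n}\|\nabla f\|_{\Phi}$. Concatenating these three bounds gives
\begin{equation*}
\frac{2\omega_{n-1}}{n\omega_n^{(n+1)/n}}\|\nabla f^{\star}\|_{\Phi}\;\leq\;\mathcal{E}_{\phi}(f^{\star})\;\leq\;\mathcal{E}_{\phi}(f)\;\leq\;\omega_n^{-1/n}\|\nabla f\|_{\Phi}.
\end{equation*}

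The only remaining step is to isolate $\|\nabla f^{\star}\|_{\Phi}$ on the left. Multiplying through by the reciprocal of the leading coefficient yields the bound with constant
\begin{equation*}
c_1\;=\;\frac{n\omega_n^{(n+1)/n}}{2\omega_{n-1}}\cdot\omega_n^{-1/n}\;=\;\frac{n\omega_n}{2\omega_{n-1}},
\end{equation*}
which is exactly the constant claimed in the statement.

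I do not expect any real obstacle here: the entire content of the argument is already packaged in Lemmas \ref{7.1} and \ref{7.2}, and the theorem simply records the arithmetic one obtains by combining them with the hypothesis. The only thing worth verifying carefully is the exponent bookkeeping on $\omega_n$, namely that $\omega_n^{(n+1)/n}\cdot\omega_n^{-1/n}=\omega_n$, which is immediate.
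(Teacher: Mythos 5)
Your proposal is correct and follows exactly the paper's route: the paper likewise obtains the chain in display (\ref{7.5}) from Lemma \ref{7.1}, the hypothesis, and Lemma \ref{7.2}, and reads off the constant $c_1=\frac{n\omega_n}{2\omega_{n-1}}$ by the same exponent arithmetic $\omega_n^{(n+1)/n}\cdot\omega_n^{-1/n}=\omega_n$. Nothing further is needed.
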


By Lemma \ref{7.2} and Theorem \ref{7.3}, the affine Orlicz P\'olya-Szeg\" o inequality (\ref{a1}) is essentially stronger than the Euclidean Orlicz P\'olya-Szeg\" o inequality (\ref{1d}).

\noindent\small  School of Mathematics and Statistics, Chongqing Technology and Business University,\\
\small  Chongqing 400067, PR China\\
E-mail address: lxyoujiang@126.com\\
\small  Department of Mathematics, Tandon School of Engineering, New York University,\\
\small  6 MetroTech Center, Brooklyn, NY 11201, USA\\
E-mail address: yjl432@nyu.edu

\end{document}